\documentclass{amsart}

\usepackage[normalem]{ulem}

\usepackage{lineno}

\usepackage{caption}

\usepackage{soul}

\usepackage{graphicx}

\usepackage{amssymb, amsmath, amsthm, hyperref, euscript, color}
\usepackage[dvipsnames]{xcolor}

\usepackage{amscd}

\usepackage{bbm}

\usepackage{enumitem}

\usepackage[english]{babel}

\numberwithin{equation}{section}

\theoremstyle{plain}

\newtheorem{theorem}{Theorem}

\newtheorem{proposition}[theorem]{Proposition}

\newtheorem{remark}[theorem]{Remark}

\newtheorem{lemma}[theorem]{Lemma}

\newtheorem{thm}{Theorem}

\newtheorem{qst}[thm]{Question}

\newtheoremstyle{named}{}{}{\itshape}{}{\bfseries}{.}{.5em}{\thmnote{#3 }#1}
\theoremstyle{named}
\newtheorem*{namedconjecture}{Conjecture}

\theoremstyle{definition}

\newtheorem{definition}[theorem]{Definition}

\newcommand{\R}{\mathbb{R}}

\newcommand{\CP}{{\mathbb C\mkern-0.5mu\mathrm P}}

\newcommand{\RP}{{\mathbb R\mkern-0.5mu\mathrm P}}

\newcommand{\Z}{\mathbb{Z}}

\newcommand{\N}{\mathbb{N}}

\DeclareMathOperator{\Gl}{GL}

\DeclareMathOperator{\Pin}{Pin}

\DeclareMathOperator{\Sl}{SL}
\DeclareMathOperator{\cs}{\#}

\makeatletter
\@namedef{subjclassname@2020}{%
  \textup{2020} Mathematics Subject Classification}
\makeatother

\begin{document}

\author{Rafael Torres}

\dedicatory{Feliz cumplea\~nos, Bob.}

\title[New collections of homotopy $S^4s$ and $\RP^4s$]{A novel construction of homotopy 4-spheres and real projective 4-spaces \`a la Cappell-Shaneson.}

\address{Scuola Internazionale Superiore di Studi Avanzati (SISSA)\\ Via Bonomea 265\\34136\\Trieste\\Italy}

\email{rtorres@sissa.it}

\subjclass[2020]{Primary 57R55; Secondary 57M60}

\maketitle

\emph{Abstract}: We produce new collections of homotopy 4-spheres and homotopy real projective 4-spaces by extending a classical construction due to Cappell-Shaneson. We use the extension to devise a mechanism that produces sets of smooth 4-manifolds within a given homeomorphism class that satisfies a mild topological condition. 

\section{Introduction.}\label{Introduction}

Cappell-Shaneson homotopy 4-spheres \cite{[CappellShaneson]} have a decades long tradition of being a propitious source of counter-examples to the \begin{namedconjecture}[Smooth 4D Poincar\'e]Every homotopy 4-sphere is diffeomorphic to $S^4$.\end{namedconjecture} These homotopy 4-spheres arise by surgerying the 0-section of a 3-torus bundle over the circle whose monodromy is given by an element $A\in \Sl(3, \Z)$ that satisfies $\det(A - I) = 1$. In the sequel, we call any such matrix a CS matrix; see Section \ref{Section Mapping Tori and CS Spheres}. A considerable amount of effort has been invested by several mathematicians to understand these 4-manifolds better. Thanks to foundational work of Aitchison-Rubinstein \cite{[AitchisonRubinstein]}, Akbulut \cite{[Akbulut1]}, Akbulut-Kirby \cite{[AkbulutKirby1], [AkbulutKirby2]} and Gompf \cite{[Gompf1], [Gompf2], [Gompf3]}, we are now aware that a large portion of the set of Capell-Shaneson homotopy 4-spheres consists of elements that are actually diffeomorphic to $S^4$; see \cite[Section 1]{[Gompf3]} for a historical account. 

The first main contribution of this note is to amplify considerably the size of the set of plausible  counter-examples to the Smooth 4D Poincar\'e conjecture through a generalization of Cappell-Shaneson's construction. 

\begin{thm}\label{Theorem CS Examples}For any collection $\{A_1, \ldots, A_k\}$ of CS matrices with $k\geq 1$, there is a pair of homotopy 4-spheres $\Sigma^{\epsilon}_{A_1\cdots A_k}$ determined by a choice of framing $\epsilon \in \Z/2$ of a surgery along the 0-section of a $(\underbrace{T^3\cs\cdots\cs T^3}_k)$-bundle over $S^1$. 
\end{thm}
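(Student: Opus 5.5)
Set $M = T^3\cs\cdots\cs T^3$ ($k$ summands). I will build a diffeomorphism $\varphi\colon M\to M$ whose action on $H_1(M;\Z)=\Z^{3}\oplus\cdots\oplus\Z^{3}$ is the block-diagonal matrix $A_1\oplus\cdots\oplus A_k$. Each $A_i\in\Sl(3,\Z)$ is realized by a linear diffeomorphism $f_i$ of $T^3=\R^3/\Z^3$. Since $f_i$ fixes the image $p$ of the origin, I isotope $f_i$ so that it is the identity on a small ball around $p$; this uses that $f_i$ is orientation preserving and $d f_i$ at $p$ lies in the connected group $\Gl^+(3,\R)$. The connected sum is formed at these balls, along a fixed connecting tree of tubes, and $\varphi$ is $f_i$ on the $i$-th summand and the identity on the tubes.

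Let $X_\varphi$ be the mapping torus, a closed orientable $4$-manifold with $\pi_1(X_\varphi)=\pi_1(M)\rtimes_{\varphi_*}\Z$. The point $p$ (or a point on a tube) is fixed by $\varphi$, so it gives a section $S^1\to X_\varphi$, the ``0-section''. Its normal bundle is trivial because $X_\varphi$ is orientable. I surger it, removing $S^1\times D^3$ and gluing in $D^2\times S^2$. There are two framings, indexed by $\pi_1(SO(3))=\Z/2$, and these give $\Sigma^\epsilon_{A_1\cdots A_k}$.

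The surgery kills the generator $t$ of the $\Z$ factor. Hence $\pi_1(\Sigma^\epsilon)=\pi_1(M)/\langle\langle \varphi_*(g)g^{-1}\rangle\rangle$, and this group is generated by the images of the factors $\pi_1(T^3)=\Z^3$. Since $\pi_1(M)$ is the free product of the $\Z^3$'s, the quotient is the free product of the groups $\Z^3/\mathrm{im}(A_i-I)$. Each of these is trivial because $\det(A_i-I)=\pm1$, which is exactly the CS condition. So $\Sigma^\epsilon$ is simply connected.

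For homology, I use the Wang sequence. This gives $H_1(X_\varphi)=\Z\oplus\operatorname{coker}(\varphi_*-I)=\Z$. On $H_2(M)=\bigoplus\Z^3$ the map $\varphi_*$ acts by the cofactor matrices of the $A_i$, and $A_i^{-T}-I$ is again unimodular. Therefore $H_2(X_\varphi)=\operatorname{coker}(\varphi_*-I\mid H_1)=0$. The surgery then yields $H_*(\Sigma^\epsilon)\cong H_*(S^4)$. Simply connected with the homology of $S^4$ implies, by Hurewicz and Whitehead, that $\Sigma^\epsilon$ is a homotopy $4$-sphere.

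The main obstacle is the fundamental group computation. I must check that the free-product presentation survives, i.e.\ that the relations $\varphi_*(g)=g$ are only imposed on the factors and do not interact. This holds because the base point is fixed, so $\varphi_*$ preserves each free factor exactly, not merely up to conjugation. I also need to verify that the section is indeed isotopic into a common fixed region.
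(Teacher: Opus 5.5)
Your argument is correct. Its skeleton matches the paper's: build a $k\cdot T^3$-bundle over $S^1$, surger a section, and use the CS condition to kill $\pi_1$. You carry out both main steps differently, though.

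\emph{The construction.} The paper obtains the bundle as a fiber sum of the Cappell--Shaneson mapping tori $M_{A_i}$ along neighborhoods of sections, and reads off a presentation of $\pi_1$ from Seifert--van Kampen. You instead realize it directly as the mapping torus of one diffeomorphism $\varphi$ of $k\cdot T^3$. This $\varphi$ is $f_i$ on the $i$-th summand and the identity on the connecting region, so you get the same fiberwise connected sum. Your version makes the section tautological. By putting the base point in the identity region (note that $p$ itself is deleted when you form the connected sum, as your parenthetical suggests), it also shows that $\varphi_*$ preserves each free factor $\Z^3$ on the nose. That is exactly what the relations $tx^l_it^{-1}=A_lx^l_i$ in the paper's presentation tacitly require.

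\emph{The homotopy type.} The paper notes $\chi(\Sigma^\epsilon)=2$ and invokes Freedman to get a homeomorphism to $S^4$. You compute homology with the Wang sequence and finish with Hurewicz and Whitehead. That is precisely what ``homotopy $4$-sphere'' demands, and it avoids Freedman entirely.

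The Wang-sequence step is more than you need. Once $\pi_1(\Sigma^\epsilon)=1$, Poincar\'e duality gives $H_1=H_3=0$, and
$\chi(\Sigma^\epsilon)=\chi(X_\varphi)-\chi(S^1\times D^3)+\chi(D^2\times S^2)=2$ then forces $H_2=0$.

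If you keep the Wang computation, fix one slip. The sequence gives
$0\to\operatorname{coker}(\varphi_*-I\mid H_2)\to H_2(X_\varphi)\to\ker(\varphi_*-I\mid H_1)\to 0$,
so you need the cokernel on $H_2$ and the kernel on $H_1$, not $\operatorname{coker}(\varphi_*-I\mid H_1)$. The cokernel vanishes by your cofactor computation, since $\det(A_i^{-T}-I)=-\det(A_i-I)$. The kernel vanishes because $A_i-I$ is invertible. You should also add the short Mayer--Vietoris argument that passes from $H_*(X_\varphi)$ to $H_*(\Sigma^\epsilon)$, using that the pushed-off section generates $H_1(X_\varphi)\cong\Z$.

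Your remaining worry about isotoping the section is unnecessary in your setup. Any point of the connected region where $\varphi$ is the identity gives a section, and all such sections are isotopic.
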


The case $k = 1$ corresponds to the Cappell-Shaneson homotopy 4-spheres, which we call CS homotopy 4-spheres from now on; see Section \ref{Section Generalizations} for a brief description of other constructions of homotopy 4-spheres. Are the smooth structures on the 4-spheres of Theorem \ref{Theorem CS Examples} standard?

The CS homotopy 4-spheres arise from Cappell-Shaneson's construction of an inequivalent smooth structure on the real projective 4-space in \cite{[CappellShaneson]}. There are two smooth s-cobordism classes of 4-manifolds homeomorphic to $\RP^4$ as shown by Ruberman \cite{[Ruberman]} and Cappell-Shaneson's construction produces at most two inequivalent smooth structures as shown by Aitchison-Rubinstein \cite{[AitchisonRubinstein]}. To the best of our knowledge, there is no example in the literature of a closed nonorientable 4-manifold that is known to admit at least three inequivalent smooth structures, let alone infinitely many. The main contribution of this paper is to produce explicit constructions of 4-manifolds that might not be diffeomorphic. Our extension of Cappell-Shaneson's work yields a myriad of candidates for such examples as our second main result illustrates on the homeomorphism class of $\RP^4$. The connected sum of $k$ copies of $S^2\times S^2$ is denoted by $k\cdot(S^2\times S^2)$.

\begin{thm}\label{Theorem Infinite RP4s}There is a set $\{Q_r: r\in \N\}$ of closed 4-manifolds that satisfies the following properties.\begin{itemize}\item There is a homeomorphism $Q_r\rightarrow \RP^4$ for every $r\in \N$.
\item There is no diffeomorphism\begin{equation*}Q_r\cs (n - 1)\cdot(S^2\times S^2)\rightarrow \RP^4\cs (n - 1)\cdot(S^2\times S^2)\end{equation*}for any $n\in \N$ provided $r \neq 0 \mod 2$.
\item There is an $N\in \N$ such that there is a diffeomorphism\begin{equation*}Q_r\cs N\cdot(S^2\times S^2)\rightarrow \RP^4\cs N\cdot(S^2\times S^2)\end{equation*}if $r = 0 \mod 2$.
\end{itemize}
\end{thm}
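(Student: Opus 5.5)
The plan is to build $Q_r$ by the nonorientable version of the construction behind Theorem \ref{Theorem CS Examples}, and then separate stable diffeomorphism classes with a $\Pin^+$ bordism invariant that depends on $r$ only through its parity.

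\textbf{Construction.} Fix CS matrices $A_1,\ldots,A_r$; the simplest choice is $A_i=A$ for all $i$. Consider the $(T^3\cs\cdots\cs T^3)$-bundle over $S^1$ used for $\Sigma^{\epsilon}_{A_1\cdots A_r}$. Cappell--Shaneson's original construction uses the monodromy $-A$ together with the free involution induced by $x\mapsto -x$ on $T^3$. I will arrange the monodromy $\phi$ of the $r$-fold fiber sum to be the analogue of this, built blockwise from $-A_i$ on the $i$-th summand. The connected sums are taken at fixed points of $-I$, made equivariant, so that $\phi$ is a square root of a monodromy of the orientable bundle. The mapping torus $M_\phi$ is then nonorientable with $H_1$ of the right size. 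Surgering a section circle with the appropriate framing produces a closed $4$-manifold $Q_r$. It has $\pi_1=\Z/2$ and $\chi=1$, and its orientation double cover is one of the homotopy spheres $\Sigma^{\epsilon}_{A_1\cdots A_r}$. For $r=0$ I set $Q_0=\RP^4$ by convention.

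\textbf{Homeomorphism to $\RP^4$.} Since $\pi_1(Q_r)=\Z/2$ acts by $w_1\neq 0$ and the universal cover is a homotopy $S^4$, $Q_r$ is homotopy equivalent to $\RP^4$. To upgrade this to a homeomorphism I will use the classification of closed topological $4$-manifolds homotopy equivalent to $\RP^4$ (Ruberman, Hambleton--Kreck--Teichner). Such manifolds are determined by the Kirby--Siebenmann invariant together with the $\Pin^+$ bordism class in $\Omega_4^{\Pin^+}\cong \Z/16$ up to sign. Since $Q_r$ is smooth, $ks(Q_r)=0$. The smooth homotopy $\RP^4$'s have $\Pin^+$ invariant $\pm1$ (that of $\RP^4$) or $\pm 9$ (that of the Cappell--Shaneson example), and both of these lie in the homeomorphism class of $\RP^4$. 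So homeomorphism follows as soon as $ks=0$ is checked.

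\textbf{Computing the invariant.} This is the heart of the proof and the step I expect to be hardest. I will show that the $\Pin^+$ invariant of $Q_r$ with a fixed $\Pin^+$ structure equals $\pm(1+8r)$ mod $16$. The strategy is to cut along the fibres of $M_\phi$ and along the separating spheres $S^2$ of the connected sums, and use a gluing or additivity formula for $\Pin^+$ bordism (equivalently, for the relevant $\eta$-invariant) under these cut-and-paste operations. Then each CS block contributes the same amount that the single block contributes in Cappell--Shaneson's case, namely $8$ relative to $\RP^4$. The delicate points are:
\begin{itemize}
\item choosing $\Pin^+$ structures compatibly across the pieces;
\item checking that the surgery and the connected-sum regions contribute nothing;
\item checking that the two framings $\epsilon$ do not change the answer mod $16$.
\end{itemize}
The fallback plan is to exhibit an explicit $\Pin^+$ bordism from $Q_r$ to the disjoint union of $Q_1^{\sqcup r}$ and $(r-1)$ copies of $-\RP^4$ (or to the analogous combination), built from the fibre-sum decomposition.

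\textbf{Stable classification.} By Kreck's modified surgery, two smooth closed $4$-manifolds with $\pi_1=\Z/2$, $w_1\neq0$, the same Euler characteristic and $\Pin^+$ normal $1$-type are stably diffeomorphic, after connected sum with copies of $S^2\times S^2$, if and only if their classes in $\Omega_4^{\Pin^+}$ agree up to sign and automorphisms. Moreover, this invariant is unchanged by connected sum with $S^2\times S^2$.
\begin{itemize}
\item If $r$ is odd, the invariant of $Q_r\cs(n-1)\cdot(S^2\times S^2)$ is $\pm 9$, while that of $\RP^4\cs(n-1)\cdot(S^2\times S^2)$ is $\pm1$. Hence no diffeomorphism exists for any $n$.
\item If $r$ is even, both invariants are $\pm1$, so Kreck's theorem provides an $N$ and a diffeomorphism $Q_r\cs N\cdot(S^2\times S^2)\cong\RP^4\cs N\cdot(S^2\times S^2)$.
\end{itemize}
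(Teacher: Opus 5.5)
Your overall plan matches the paper's. You fibre-sum $r$ nonorientable Cappell--Shaneson mapping tori, remove a section, and glue in $\RP^4\setminus\nu(\RP^1)=D^2\widetilde{\times}\RP^2$. You then get the homeomorphism from Hambleton--Kreck--Teichner, obtain $[Q_r]=\pm1+8r\in\Omega_4^{\Pin^+}$ by additivity, and finish with Stolz's (Kreck's) stable classification. The paper gets the $8r$ by bording $M_{B_1\cdots B_r}$ to $M_{B_1}\sqcup\cdots\sqcup M_{B_r}$ and quoting Stolz's $[M_B]=8$. Your fallback bordism to $rQ_1-(r-1)\RP^4$ is equivalent, and at the paper's level of detail that step is fine.

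\textbf{The gap is the input to your construction.} You allow arbitrary CS matrices $A_i$ and use the blockwise monodromy $-A_i$.

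\emph{The gluing.} The section $t$ is then orientation-reversing, so no surgery with a framing is possible. What you actually do is replace $S^1\widetilde{\times}D^3$ by $D^2\widetilde{\times}\RP^2$, with the gluing of $S^2\widetilde{\times}S^1$ chosen to match $\Pin^+$ structures. This imposes the relation $t^2=1$.

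\emph{The fundamental group.} Since $t^2$ acts on the $i$-th $\Z^3$ by $A_i^2$, you get $\pi_1(Q_r)=\Z/2$ only if $\det(A_i^2-I)=\det(A_i-I)\det(A_i+I)=\pm1$. That requires the extra condition $\det(A_i+I)=\pm1$, which many CS matrices violate. For example, $A=\left(\begin{smallmatrix}0&1&0\\0&1&1\\1&0&0\end{smallmatrix}\right)$ has $\det A=\det(A-I)=1$ but $\det(A+I)=3$. Your $Q_1$ would then have $\pi_1=(\Z^3/(A^2-I)\Z^3)\rtimes\Z/2$, a group of order $6$.

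\emph{The fix.} Require $-A_i$ to be an NCS matrix. By Aitchison--Rubinstein this means $-A_i$ is conjugate to $B^\ast$ or $B^{\ast\ast}$; for instance, $A_i=-B^\ast$ is CS with $\det(A_i+I)=1$. With this restriction your $Q_r$ is exactly the paper's, with $B_i=-A_i$.

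\emph{The double cover.} For the same reason, the orientation double cover comes from the mapping torus of $\phi^2$. It is therefore $\Sigma_{A_1^2\cdots A_r^2}$, not $\Sigma_{A_1\cdots A_r}$. Also, $x\mapsto -x$ has eight fixed points on $T^3$, so the ``free involution'' description should be dropped.

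\textbf{A secondary misstatement.} You say homotopy $\RP^4$s are classified by $ks$ together with the class in $\Omega_4^{\Pin^+}\cong\Z/16$ up to sign. That is false: it would make Cappell--Shaneson's $Q$ (class $\pm9$) non-homeomorphic to $\RP^4$ (class $\pm1$), contradicting Ruberman. The Hambleton--Kreck--Teichner invariant lives in topological $\Pin^+$ bordism, where the smooth classes $\pm1$ and $\pm9$ have the same image. Your next sentence implicitly uses this correct version, so the homeomorphism conclusion survives once the classification is stated properly.
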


It is intriguing to ask whether the 4-manifolds of Theorem \ref{Theorem Infinite RP4s} are pairwise non-diffeomorphic or if the set contains more than two elements that are pairwise non-diffeomorphic. We hope the explicit nature of our construction stimulates further work as we gather several related questions in Section \ref{Section Questions}. Theorem \ref{Theorem Infinite RP4s} does recover results of Cappell-Shaneson and Fintushel-Stern \cite{[FintushelStern]}, and their universal covers are included in the homotopy 4-spheres of Theorem \ref{Theorem CS Examples}. We use the 4-manifolds constructed in the proof of Theorem \ref{Theorem Infinite RP4s} to devise a broader cut-and-paste mechanism to unveil inequivalent smooth structures on nonorientable 4-manifolds that satisfy certain mild restrictions. The twisted nonorientable 2-disk bundle over the real projective plane is denoted by $D^2\widetilde{\times} \RP^2$; see Section \ref{Section CS RP4s} for a definition. 

\begin{thm}\label{Theorem Mechanism}Let $X$ be a closed nonorientable 4-manifold that contains a smoothly embedded real projective plane $P\subset X$ with tubular neighborhood $\nu(P) = D^2\widetilde{\times} \RP^2$. There is a set $\{X_r: r\in \N\}$ and a homeomorphism\begin{equation}\label{Homeomorphism Infinite Set}X_r\rightarrow X\end{equation}for every $r\in \N$.

Suppose furthermore that $H^1(X; \Z/2) = \Z/2$ and that its second Stiefel-Whitney class satisfies $w_2(X) = 0$. If $r\neq 0 \mod 2$ and the $\eta$-invariant of $X$ satisfies $\eta(X, \phi_X)\neq \pm \frac{1}{2}\mod 2\Z$ for a $\Pin^+$-structure $(X, \phi_X)$, then  there is no diffeomorphism\begin{equation}X_r\cs (n - 1)\cdot(S^2\times S^2)\rightarrow X\cs (n - 1)\cdot(S^2\times S^2)\end{equation} for any $n\in \N$.\end{thm}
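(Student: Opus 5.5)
The plan is to cut and paste along $P$, using the 4-manifolds $Q_r$ from the proof of Theorem \ref{Theorem Infinite RP4s}. Since $\nu(P) = D^2\widetilde{\times}\RP^2$ is also a tubular neighborhood of the standard $\RP^2\subset\RP^4$, we have a decomposition $\RP^4 = D^2\widetilde{\times}\RP^2\cup_{S^3} D^4$, and the complement $X\setminus \nu(P)$ plays the role of $D^4$. The construction of $Q_r$ should exhibit $Q_r = N_r\cup D^4$, where $N_r$ is a fake copy of $D^2\widetilde{\times}\RP^2$. Here $N_r$ is homotopy equivalent to $\RP^2$, its boundary is $\partial N_r = S^3/Q_8$ (the quaternionic space), and $N_r$ is homeomorphic rel boundary to $D^2\widetilde{\times}\RP^2$. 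We then define
\begin{equation*}X_r := (X\setminus \nu(P))\cup_{\partial} N_r,\end{equation*}
gluing by the identification $\partial N_r = \partial \nu(P)$ coming from the construction of $Q_r$.

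The homeomorphism (\ref{Homeomorphism Infinite Set}) follows once the homeomorphism $N_r\to D^2\widetilde{\times}\RP^2$ is known to restrict to the identity on the boundary, or at least to a boundary map that extends over $X\setminus\nu(P)$. If the proof of Theorem \ref{Theorem Infinite RP4s} only gives a homeomorphism $Q_r\to\RP^4$, I would isotope it to be standard near the $D^4$ summand using topological isotopy extension, and then restrict to the complement of the ball. This gives a homeomorphism $N_r\to D^2\widetilde{\times}\RP^2$ that is the identity on the boundary. Extending it by the identity on $X\setminus\nu(P)$ gives $X_r\to X$.

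For the obstruction, I would use the $\Pin^+$ $\eta$-invariant, as in Stolz's proof that the Cappell--Shaneson $\RP^4$ is exotic. Because $w_2(X)=0$ and $H^1(X;\Z/2)=\Z/2$, $X$ has exactly two $\Pin^+$ structures $\phi_X$ and $\bar\phi_X$. Their $\eta$-invariants differ by sign, and are well defined modulo $2\Z$ after stabilization, since $S^2\times S^2$ summands do not change $\eta$ mod $2\Z$; indeed $\eta$ is a $\Pin^+$ bordism invariant with values in $\Omega_4^{\Pin^+}=\Z/16$, and $S^2\times S^2$ is $\Pin^+$ null-bordant. The same holds for $X_r$. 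The key step is additivity: $\eta$ is computed from a bordism class, and $X_r$ differs from $X$ by replacing $\nu(P)$ with $N_r$. Hence
\begin{equation*}[X_r]-[X]=[Q_r]-[\RP^4]\in\Omega_4^{\Pin^+},\end{equation*}
with compatible $\Pin^+$ structures, because both differences are the class of $N_r\cup_\partial -\nu(P)$. The proof of Theorem \ref{Theorem Infinite RP4s} should show that for $r$ odd this difference is $\pm 8\in\Z/16$, that is, $\eta$ shifts by $1$ modulo $2\Z$.

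A diffeomorphism $X_r\cs(n-1)(S^2\times S^2)\to X\cs(n-1)(S^2\times S^2)$ would therefore force the sets $\{\pm\eta(X,\phi_X)+1\}$ and $\{\pm\eta(X,\phi_X)\}$ to agree mod $2\Z$. The case $\eta+1\equiv\eta$ is impossible, so we would need $\eta+1\equiv-\eta$, that is, $2\eta\equiv -1$ and $\eta\equiv\pm\tfrac12 \bmod 2\Z$. This is exactly the case the hypothesis excludes.

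The main obstacle is making the gluing argument rigorous. The proof of Theorem \ref{Theorem Infinite RP4s} must be arranged so that $N_r$ is genuinely a boundary-compatible replacement for $\nu(P)$. The $\Pin^+$ structures must also be matched across $\partial\nu(P)$: $H^1(S^3/Q_8;\Z/2)$ is nontrivial, so I would need to check that the restriction of the structure from $X\setminus\nu(P)$ extends over $N_r$. I would try to settle this by noting that $H^1(X;\Z/2)\to H^1(\nu(P);\Z/2)$ is an isomorphism, so the structures on $\nu(P)$, and those on $N_r$ (which is homotopy equivalent to $\RP^2$), are determined by the ambient ones.
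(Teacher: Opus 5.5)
Your strategy is the paper's: excise $\nu(P)$, glue in the exotic copy of $D^2\widetilde{\times}\RP^2$ coming from $Q_r$, and detect the change by a shift of $\eta$ by $1$ for odd $r$. Your relation $[X_r]-[X]=[Q_r]-[\RP^4]$ is an equivalent substitute for the paper's $\Pin^+$-bordism between $X_r$ and $X\sqcup M_{B_1\cdots B_r}$. Your final step is correct: the sets $\{\pm\eta+1\}$ and $\{\pm\eta\}$ can coincide modulo $2\Z$ only if $2\eta\equiv -1$. It also handles the two $\Pin^+$-structures more carefully than the paper's bullet points.

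There is, however, a genuine error in your geometric setup, and it breaks your homeomorphism step. The complement of $\nu(\RP^2)=D^2\widetilde{\times}\RP^2$ in $\RP^4$ is not a $4$-ball. Write $S^4=(S^2\times D^2)\cup(D^3\times S^1)$ and divide by the antipodal map: the complement is the neighborhood $S^1\widetilde{\times}D^3$ of the dual orientation-reversing loop $\RP^1$. The common boundary is $S^2\widetilde{\times}S^1$, neither $S^3$ nor $S^3/Q_8$; the quaternionic space is the boundary of the neighborhood of the standard $\RP^2\subset S^4$, which has orientable total space. Your decomposition would even give $\chi(\RP^4)=1+1-0=2$.

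So $X\setminus\nu(P)$ plays the role of $S^1\widetilde{\times}D^3$, not $D^4$. The fake piece is $N_r=Q_r\setminus\nu(\alpha_r)$ with $\alpha_r$ a loop generating $\pi_1(Q_r)$, which is exactly the paper's $Q_r'$. There is no ball on which to make a homeomorphism $h\colon Q_r\to\RP^4$ standard. Instead you must isotope $h$ so that $h(\nu(\alpha_r))=\nu(\alpha)$, using that homotopic loops in a $4$-manifold are isotopic and that normal bundles are unique. The induced boundary map is then a self-homeomorphism of $S^2\widetilde{\times}S^1$ that need not be isotopic to the identity. You must therefore either use it, isotoped to a diffeomorphism, as the gluing map defining $X_r$, or check that it extends over $X\setminus\nu(P)$. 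This choice of gluing is what the paper addresses by citing Kim--Raymond.

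With the correct boundary, your $\Pin^+$ concern resolves as you suggest. The map $H^1(D^2\widetilde{\times}\RP^2;\Z/2)\to H^1(S^2\widetilde{\times}S^1;\Z/2)$ is an isomorphism, so each $\Pin^+$-structure on the boundary extends uniquely over $N_r$. Also $H^1(X;\Z/2)\to H^1(\nu(P);\Z/2)$ is an isomorphism, because $w_1(X)$ restricts to $w_1(\nu(P))\neq 0$.
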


Cappell-Shaneson's work \cite[Theorem 3.1]{[CappellShaneson]} yields a simple homotopy equivalence with nontrivial normal invariant $M\rightarrow N$ between a 4-manifold $M$ that contains an orientation-reversing simple loop with homotopy class of order two in $\pi_1(M)$ and the 4-manifold $N$ produced in their original construction; see Section \ref{Section CS RP4s}. A contribution of the mechanism summarized in Theorem \ref{Theorem Mechanism} is that it provides a homeomorphism between the 4-manifolds produced through an indirect use of the topological classification results of homotopy $\RP^4$s due to Ruberman \cite{[Ruberman]} and Hambleton-Kreck-Teichner \cite{[HambletonKreckTeichner]}. The vanishing of the second Stiefel-Whitney class $w_2(X)$ enables the usage of the $\eta$-invariant and this spectral invariant is employed to distinguish the smooth structures through work of Stolz \cite{[Stolz]}; see Section \ref{Section New HRP4s} and Remark \ref{Remark Hypothesis H1}.


The structure of the paper is the following. Section \ref{Section Mapping Tori and CS Spheres} contains a brief description of the raw materials and technology used in the construction of the CS homotopy 4-spheres. Cappell-Shaneson's construction of mapping tori using the 3-torus is extended in Section \ref{Section Extension CS Mapping Tori} to a prescribed number of connected sums of $T^3$. The latter 4-manifolds are used in Section \ref{Section Proof of Theorem CS Examples} to build new homotopy 4-spheres and to prove Theorem \ref{Theorem CS Examples}. Section \ref{Section Generalizations} contains a discussion on other constructions of homotopy 4-spheres that can be paired with our extension in order to produce even larger collections. Our construction of new homotopy $\RP^4$s is described in Section \ref{Section New HRP4s}. The proof of Theorem \ref{Theorem Infinite RP4s} is in Section \ref{Section Proof of Theorem Infinite RP4s}. Theorem \ref{Theorem Mechanism} is proven in Section \ref{Section Construction Mechanism}. We record several canonical questions in Section \ref{Section Questions} for the convenience of the interested reader.\\


\section{Mapping tori and Cappell-Shaneson homotopy 4-spheres.}\label{Section Mapping Tori and CS Spheres} Let $\varphi_A: T^3\rightarrow T^3 = \R^3/\Z^3$ be a self-diffeomorphism of the 3-torus such that the induced map in homology $(\varphi_A)_\ast: H_1(T^3)\rightarrow H_1(T^3)$ is given by a CS matrix $A\in \Sl(3; \Z)$. Cappell-Shaneson \cite{[CappellShaneson]} construct the mapping torus associated to the CS matrix $A$\begin{equation}\label{Mapping Torus}M_A := T^3\times [0, 1]/\sim\end{equation}under the identification $(x, 0) \sim (\varphi_A(x), 1)$. The 4-manifold (\ref{Mapping Torus}) is the total space of a $T^3$-bundle over $S^1$ with monodromy given by the CS matrix $A$. A section of $M_A$ is given by the simple loop\begin{equation}\label{Section}\alpha_A:= (\{x_0\}\times [0, 1])/\sim)\subset M_A.\end{equation}The sections of $M_A$ are related by a self-diffeomorphism $M_A\rightarrow M_A$ that preserves its fibers.

\begin{lemma}\label{Lemma CS}\cite{[CappellShaneson]}. The aspherical 4-manifold (\ref{Mapping Torus}) is a homology $S^1\times S^3$ and its fundamental group has a presentation\begin{equation}\pi_1(M_A) = \langle x_1, x_2, x_3, t : tx_i t^{-1} = A x_i, [x_i, x_i] = 1\rangle\end{equation}for $i, j = 1, 2, 3$. The elements $\{x_1, x_2, x_3\}$ are homotopy classes of closed simple loops $\{\alpha_1, \alpha_2, \alpha_3\}$ in the 3-torus fiber of $M_A$ and $t$ is the homotopy class of a section (\ref{Section}).\end{lemma}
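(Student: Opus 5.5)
We establish the three assertions of Lemma \ref{Lemma CS} (asphericity, the presentation of $\pi_1$, and the homology of $S^1\times S^3$) in that order, using the fibration $T^3\hookrightarrow M_A\rightarrow S^1$ given by (\ref{Mapping Torus}).

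\emph{Asphericity.} We use the long exact sequence of homotopy groups of the fibration. Both the fiber $T^3$ and the base $S^1$ have contractible universal covers and are therefore aspherical, so $\pi_n(M_A)=0$ for all $n\geq 2$. Equivalently, the universal cover of $M_A$ is $\R^3\times\R\cong\R^4$.

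\emph{Fundamental group.} The same sequence truncates to the short exact sequence
\begin{equation*}1\rightarrow \pi_1(T^3)=\Z^3\rightarrow \pi_1(M_A)\rightarrow \Z\rightarrow 1.\end{equation*}
The loop $\alpha_A$ of (\ref{Section}) is a section and maps to a generator of $\Z$, so the sequence splits and $\pi_1(M_A)\cong \Z^3\rtimes_A\Z$. Here conjugation by $t=[\alpha_A]$ acts on $\pi_1(T^3)$ through $(\varphi_A)_\ast=A$. Choosing the standard coordinate circles $\alpha_1,\alpha_2,\alpha_3$ in a fiber $T^3\times\{0\}$ as generators $x_i$ of $\Z^3$, we obtain the stated presentation. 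The relations $[x_i,x_j]=1$ express that $\pi_1(T^3)$ is abelian, and $tx_it^{-1}=Ax_i$, with $Ax_i$ read multiplicatively in the $x_j$, records the monodromy. The convention of $A$ versus $A^{-1}$ depends on the orientation of the interval direction; it can be normalized by reversing $t$. Alternatively, the presentation can be derived directly from van Kampen applied to the decomposition of $M_A$ as $T^3\times[0,1]$ with its ends glued by $\varphi_A$, which is an HNN extension of $\Z^3$.

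\emph{Homology.} We apply the Wang sequence of the mapping torus:
\begin{equation*}\cdots\rightarrow H_q(T^3)\xrightarrow{(\varphi_A)_\ast - I} H_q(T^3)\rightarrow H_q(M_A)\rightarrow H_{q-1}(T^3)\xrightarrow{(\varphi_A)_\ast - I}H_{q-1}(T^3)\rightarrow\cdots\end{equation*}
On $H_q(T^3)=\Lambda^q\Z^3$ the monodromy acts by $\Lambda^qA$.
\begin{itemize}
\item For $q=0$ and $q=3$, $\Lambda^qA$ is the identity because $\det A=1$. Hence $\Lambda^qA - I=0$, which yields $H_0(M_A)=\Z$ and $H_4(M_A)=\Z$.
\item For $q=1$, $A-I$ is invertible over $\Z$ since $\det(A-I)=1$. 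For $q=2$, $\Lambda^2A$ is the cofactor matrix, which for $A\in\Sl(3,\Z)$ is conjugate to $(A^{-1})^T$. Therefore
\begin{equation*}\det(\Lambda^2A-I)=\det(A^{-1}-I)=\det(A^{-1})\det(I-A)=-\det(A-I)=-1,\end{equation*}
so $\Lambda^2A-I$ is also invertible over $\Z$.
\end{itemize}
Feeding these into the Wang sequence gives $H_1(M_A)=\Z$, generated by $t$, together with $H_2(M_A)=0$ and $H_3(M_A)=\coker(\Lambda^3A-I)$ modulo nothing. Since the $H_2(T^3)$ terms contribute only through the isomorphism $\Lambda^2A-I$, we get $H_3(M_A)=\Z$. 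Thus $M_A$ has the homology of $S^1\times S^3$.

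The main obstacle is the degree-two computation. One must identify the action on $H_2(T^3)$ with the adjugate/inverse-transpose of $A$ and check that $\det(A^{-1}-I)=\pm1$ follows from $\det(A-I)=1$ and $\det A=1$. The remaining steps are formal.
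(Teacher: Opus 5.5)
Your proposal is correct. The fibration sequence for $T^3\hookrightarrow M_A\to S^1$ gives asphericity and the split extension $\Z^3\rtimes_A\Z$; you also silently correct the typo $[x_i,x_i]$ to $[x_i,x_j]$ in the statement. Your Wang-sequence computation is right, including $\det(\Lambda^2A-I)=\det(A^{-1}-I)=-\det(A-I)=-1$.

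The paper gives no argument for this lemma; it simply quotes it from Cappell--Shaneson. So there is nothing to compare against, and what you wrote is the standard verification.

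One simplification is worth knowing: the degree-two step you single out as the main obstacle can be bypassed.
\begin{itemize}
\item The Wang sequence gives $H_1(M_A)\cong\Z$. This is exactly where $\det(A-I)=\pm1$ enters.
\item Since $\det A=1$, $M_A$ is closed and orientable, so Poincar\'e duality and universal coefficients apply.
\item They give $H_3(M_A)\cong H^1(M_A)\cong\Z$.
\item They also give $H_2(M_A)\cong H^2(M_A)\cong\operatorname{Hom}(H_2(M_A),\Z)$, which is free because $\operatorname{Ext}(H_1(M_A),\Z)=0$.
\item Finally $\chi(M_A)=\chi(T^3)\chi(S^1)=0$ forces $b_2=0$, hence $H_2(M_A)=0$.
\end{itemize}
This also shows that invertibility of $\Lambda^2A-I$ over $\Z$ is automatic once $\det A=1$ and $\det(A-I)=\pm1$. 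Your explicit cofactor computation is therefore a consistency check rather than an independent requirement.
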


There are two choices of framing for the loop $\alpha_A\subset M_A$ parametrized by $\epsilon \in \Z/2$. The untwisted framing corresponds to $\epsilon = 0$ and the twisted one to $\epsilon = 1$; see Aitchison-Rubinstein \cite{[AitchisonRubinstein]} and Gompf \cite[Section 4]{[Gompf3]}.

\begin{definition}\label{Definition CS 4-Sphere}A Cappell-Shaneson homotopy 4-sphere associated to a CS matrix $A$ is the pair of 4-manifolds\begin{equation}\Sigma_A^\epsilon:= (M_A\setminus \nu(\alpha_A)) \cup_\epsilon (D^2\times S^2).\end{equation}\end{definition}Both CS homotopy 4-spheres of Definition \ref{Definition CS 4-Sphere} are homeomorphic to $S^4$ for every choice of CS matrix \cite{[Freedman]}, \cite[Theorem 1.2.27]{[GompfStipsicz]}. 

\section{An assemblage of orientable $(T^3\cs \cdots \cs T^3)$-bundles over $S^1$.}\label{Section Extension CS Mapping Tori}Let\begin{equation}\label{CS Matrices Collection}\{A_1, \ldots, A_k\}\end{equation}be a collection of CS matrices as defined in the Introduction and let\begin{equation}\label{CS Collection}\{M_{A_1}, \ldots, M_{A_k}\}\end{equation}be the corresponding mapping tori associated to the CS matrices (\ref{CS Matrices Collection}) that were built in Section \ref{Section Mapping Tori and CS Spheres}. Let $M_{A_1\cdots A_k}$ be the closed oriented 4-manifold obtained from inductively gluing together each element of (\ref{CS Collection}) along the a section of $M_{A_i}$ for $1\leq i \leq k$ as follows. Let $\alpha_1\subset M_{A_1}$ be a section as in (\ref{Section}) and $\alpha_i, \alpha_i'\subset M_{A_i}$ be disjoint sections for $2\leq i \leq k$. Assemble the 4-manifold\begin{equation}\label{Generalized MT}M_{A_1\cdots A_k}:= (M_{A_1}\setminus \nu(\alpha_1))\cup (M_{A_2}\setminus \nu(\alpha_2)\sqcup \nu(\alpha_2'))\cup \cdots \cup M_{A_k}\setminus \nu(\alpha_k)\end{equation}that contains a closed simple loop\begin{equation}\label{Section Generalized}\alpha_k'\subset M_{A_1\cdots A_k}.\end{equation}

An extension of Lemma \ref{Lemma CS} to this setting is as follows. The connected sum of $k$ copies of the 3-torus is denoted by $k\cdot T^3$

\begin{lemma}\label{Lemma Extension CS}The 4-manifold $M_{A_1\cdots A_k}$ defined in (\ref{Generalized MT}) is a $(k\cdot T^3)$-bundle over $S^1$ with sections isotopic to the loop (\ref{Section Generalized}) and it has the homology of $S^1\times S^3$. Its fundamental group has a presentation with generators\begin{equation}\label{Homotopy classes fibers}\{t, \{x^1_1, x^1_2, x^1_3\}, \cdots, \{x^k_1, x^k_2, x^k_3\}\}\end{equation} subject to the relations\begin{equation}\label{Generalized relations}\{tx^l_it^{-1} = Ax^l_i: 1\leq i \leq 3, 1\leq l \leq k\}\end{equation}as well as $[x^l_i, x^l_j] = 1$. The elements\begin{equation}\{\{x^1_1, x^1_2, x^1_3\}, \cdots, \{x^k_1, x^k_2, x^k_3\}\}\end{equation}are homotopy classes of closed simple loops\begin{equation}\{\{\alpha^1_1, \alpha^1_2, \alpha^1_3\}, \ldots, \{\alpha^k_1, \alpha^k_2, \alpha^k_3\}\}\end{equation}in the $(k\cdot T^3)$-fiber of $M_k$ and $t$ is the homotopy class of the closed simple loop $\alpha_k'\subset M_{A_1\cdots A_k}$.\end{lemma}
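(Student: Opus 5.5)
The plan is to read (\ref{Generalized MT}) as a fiberwise construction and then compute its invariants from the fiber bundle structure. First I will choose the tubular neighborhoods $\nu(\alpha_i)$ and $\nu(\alpha_i')$ compatibly with the bundle projections $M_{A_i}\rightarrow S^1$. Since $\alpha_i$ is a section, I can take $\nu(\alpha_i) = \bigcup_{t} D^3_t$, where each $D^3_t$ is a small 3-ball in the fiber $T^3$ over $t$ centred at $\alpha_i(t)$. To arrange this, I will isotope $\varphi_{A_i}$ so that it fixes small balls around the basepoints $x_0$ and $x_0'$. Both are possible because any orientation-preserving diffeomorphism is isotopic to one fixing a ball, and any two sections are related by a fiber-preserving diffeomorphism, as noted after (\ref{Section}).

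With this choice, $M_{A_i}\setminus\nu(\alpha_i)$ is the mapping torus of $\varphi_{A_i}$ restricted to $T^3\setminus D^3$. Its boundary $S^2\times S^1$ is fibered by the spheres $\partial D^3_t$ with trivial monodromy, and I will glue fiberwise by the identity in the $S^1$-direction. Fiberwise, this gluing is exactly the connected sum $T^3\cs\cdots\cs T^3$, and the glued monodromies combine into a single diffeomorphism $\varphi = \varphi_{A_1}\cs\cdots\cs\varphi_{A_k}$ of $k\cdot T^3$. So $M_{A_1\cdots A_k}$ is the mapping torus of $\varphi$. The leftover section $\alpha_k'$ comes from the fixed point $x_0'$ and is therefore a section; any other section is isotopic to it through the same fiber-preserving argument.

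For $\pi_1$, I will use the long exact sequence of the fibration, or equivalently the HNN/mapping-torus presentation $\pi_1 = \pi_1(k\cdot T^3)\rtimes_{\varphi_*}\Z$, with $t$ represented by $\alpha_k'$. By van Kampen (gluing along simply connected $S^2$), $\pi_1(k\cdot T^3)$ is the free product of $k$ copies of $\Z^3$. This gives the generators $x^l_i$, represented by loops $\alpha^l_i$ in the $l$-th summand away from the balls, together with the relations $[x^l_i,x^l_j]=1$. Because $\varphi$ preserves each summand and fixes the basepoint ball, $\varphi_*$ acts on the $l$-th factor by $A_l$, which gives $tx^l_it^{-1}=A_l x^l_i$.

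For homology, I will use the Wang sequence
\begin{equation*}H_q(k\cdot T^3)\xrightarrow{\varphi_*-I}H_q(k\cdot T^3)\rightarrow H_q(M)\rightarrow H_{q-1}(k\cdot T^3)\xrightarrow{\varphi_*-I}H_{q-1}(k\cdot T^3).\end{equation*}
In degrees $1$ and $2$, $\varphi_*$ is block diagonal. In degree 1 the blocks are the $A_l$; in degree 2, by Poincaré duality, they are $(A_l^{T})^{-1}$. Each block of $\varphi_*-I$ has determinant $\pm\det(A_l-I)=\pm1$, so $\varphi_*-I$ is an isomorphism in both degrees. In degrees $0$ and $3$ the map is $0$ on $\Z$. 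The sequence then gives $H_*(M)\cong H_*(S^1\times S^3)$.

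The main obstacle is making the fiberwise connected-sum identification rigorous. This means isotoping each $\varphi_{A_i}$ to fix balls around two points simultaneously, and checking that the boundary gluings are orientation-compatible so that the result is a bundle rather than something twisted along $S^1$. I would handle this with the disc theorem, applied in a family over $[0,1]$.
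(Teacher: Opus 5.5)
Your argument is correct in substance but takes a different route from the paper. The paper never builds a global monodromy. It asserts the bundle structure from the assembly and computes $\pi_1$ by Seifert--van Kampen applied directly to the pieces of (\ref{Generalized MT}). General position shows that deleting the codimension-three loops does not change $\pi_1$, and Lemma \ref{Lemma CS} describes each piece. It then gets that $\alpha_k'$ is isotopic to a section because homotopy implies isotopy for loops in a 4-manifold. It gives no argument for the homology claim.

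You instead isotope the $\varphi_{A_i}$ to fix balls and identify $M_{A_1\cdots A_k}$ with the mapping torus of $\varphi_{A_1}\#\cdots\#\varphi_{A_k}$. You then read off $\pi_1=(\ast_l\Z^3)\rtimes\Z$ and compute homology by the Wang sequence. This costs some set-up (the disc theorem in a family), but it actually proves the bundle claim and makes $\alpha_k'$ a section on the nose. It also supplies the $H_*(S^1\times S^3)$ computation the paper omits; abelianizing the presentation would only give $H_1$. Your use of $\det(A_l-I)=1$ in degree one and of $\Lambda^2A_l=(A_l^{T})^{-1}$ in degree two is exactly what is needed. You also correctly write $A_l$ where the statement has a single $A$.

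Two corrections. First, to get exactly $tx^l_it^{-1}=A_lx^l_i$, the arcs joining the basepoint to the necks must be fixed by $\varphi$, not just the balls. If $\varphi_{A_i}$ only fixes two disjoint balls, then for a summand reached through another summand $T^3_l$ the relation holds only up to conjugation by some $u\in\pi_1(T^3_l)$. The simplest fix is to isotope each $\varphi_{A_i}$ to fix a single ball containing both $x_0$ and $x_0'$. Alternatively, modify the connecting path in $T^3_l$ by the loop $w=(I-A_l)^{-1}u$, which the CS condition permits.

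Second, drop the claim that every other section is isotopic to $\alpha_k'$. The remark after (\ref{Section}) is special to $k=1$, where sections are governed by $\mathrm{coker}(A-I)=0$. For $k\geq 2$, sections representing $gt$ and $ht$ are freely homotopic iff $g,h$ are $\varphi_*$-twisted conjugate in $\ast_l\Z^3$. There are infinitely many such classes: already the equivariant quotient $\ast_l(\Z/p)^3$ is non-elementary hyperbolic, hence has infinitely many Reidemeister classes. So that claim is false. What the lemma needs, and what the paper proves, is only that $\alpha_k'$ is isotopic to a section, and your construction gives this directly.
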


\begin{proof}The bundle structure of the 4-manifold $M_{A_1\cdots A_k}$ follows from its assembly of $T^3$-bundles done in (\ref{Generalized MT}). A general position argument shows that the inclusions $M\setminus \nu(\alpha)\hookrightarrow M$ and $M\setminus \nu(\alpha)\sqcup \nu(\alpha')\hookrightarrow M$ induce isomorphisms of the corresponding fundamental groups since loops in a 4-manifold $M$ have codimension three. Lemma \ref{Lemma CS} and the Seifert-van Kampen imply the existence of a presentation for $\pi_1(M_{A_1\cdots A_k})$ with generators (\ref{Homotopy classes fibers}) and relations (\ref{Generalized relations}) along with the corresponding commutators. Since the homotopy class of a loop in a 4-manifold determines its isotopy class \cite[Example 4.1.3]{[GompfStipsicz]}, the loop $\alpha_k'\subset M_{A_1\cdots A_k}$ is isotopic to a section.\end{proof}

\section{New homotopy 4-spheres and a proof of Theorem \ref{Theorem CS Examples}.}\label{Section Proof of Theorem CS Examples}

We prove the first result discussed in the Introduction in this section. For every $(k\cdot T^3)$-bundle over the circle $M_{A_1\cdots A_k}$ that decomposes into $k\geq 1$ copies of $T^3$-bundles over $S^1$ whose monodromy is a CS matrix as in the previous section, there is a pair of 4-manifolds\begin{equation}\label{New Homotopy 4-Spheres}\Sigma^\epsilon_{A_1\cdots A_k} = (M_{A_1\cdots A_k}\setminus \nu(\alpha_k')) \cup_\epsilon (D^2\times S^2)\end{equation}that are determined by a choice of framing $\epsilon \in \Z/2\Z$. Theorem \ref{Theorem CS Examples} is equivalent to the following result.

\begin{theorem}\label{Theorem Extension CS}Let $\Sigma^\epsilon_{A_1\cdots A_k}$ be the closed 4-manifold obtained by doing surgery to $M_{A_1\cdot A_k}$ along the simple loop $\alpha_k'\subset M_{A_1\cdots A_k}$ with a choice of framing $\epsilon \in \Z/2$ as in (\ref{New Homotopy 4-Spheres}). There is a homeomorphism\begin{equation*}\Sigma^\epsilon_{A_1\cdots A_k}\rightarrow S^4.\end{equation*}\end{theorem}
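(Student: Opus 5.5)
The plan is to show that $\Sigma^\epsilon_{A_1\cdots A_k}$ is a simply connected closed 4-manifold with the integral homology of $S^4$, and then invoke Freedman's classification \cite{[Freedman]}, \cite[Theorem 1.2.27]{[GompfStipsicz]}: such a manifold is homotopy equivalent to $S^4$, hence homeomorphic to it. Since the construction (\ref{New Homotopy 4-Spheres}) is a smooth surgery, the result is a closed smooth (in particular topological) 4-manifold for either framing $\epsilon$. The framing plays no role in the homotopy-theoretic computations, so both cases are handled simultaneously.

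First I compute the fundamental group. Removing $\nu(\alpha_k')$, a neighborhood of a loop, does not change $\pi_1$ by the general position argument already used in the proof of Lemma \ref{Lemma Extension CS}. Gluing in $D^2\times S^2$ along $S^1\times S^2$ then kills exactly the normal closure of the class of $\alpha_k'$, which is $t$ by Lemma \ref{Lemma Extension CS}. This uses Seifert--van Kampen with $\pi_1(D^2\times S^2)=1$ and $\pi_1(S^1\times S^2)=\Z$.

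So $\pi_1(\Sigma^\epsilon_{A_1\cdots A_k})$ is the quotient of the presentation in Lemma \ref{Lemma Extension CS} by $t=1$. The relations (\ref{Generalized relations}) then read $x^l_i = A_l x^l_i$ for each $l$, together with the commutation relations $[x^l_i,x^l_j]=1$. For a fixed $l$, the subgroup generated by $x^l_1,x^l_2,x^l_3$ is therefore abelian, and in it the relation says $(A_l - I)$ applied to the vector of generators is trivial. Since $\det(A_l - I)=1$, the matrix $A_l - I$ is invertible over $\Z$, so each $x^l_i$ is trivial. Doing this for every $l$ kills all generators, and the group is trivial.

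Here is the main obstacle. The presentation in Lemma \ref{Lemma Extension CS} should be read with the relation for block $l$ given by $A_l$, and the relations for different blocks must not interfere. I would check carefully that the commutation relations within each block survive in the quotient, since that is what makes the linear-algebra argument legitimate. In Cappell--Shaneson's $k=1$ argument, this is exactly the step where the normal form $tx t^{-1}=Ax$ turns into the abelian equation $(A-I)x=0$.

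Next I compute homology. By Lemma \ref{Lemma Extension CS}, $M_{A_1\cdots A_k}$ is a homology $S^1\times S^3$ whose $H_1$ is generated by $t$. Surgery on the loop $\alpha_k'$ removes $S^1\times D^3$ and glues in $D^2\times S^2$, which kills $H_1$. A Mayer--Vietoris argument, or equivalently the Euler characteristic (which changes from $0$ to $2$) combined with Poincaré duality, then gives $H_2=0$ and $H_3=0$. Alternatively, $H_1=0$ follows from simple connectivity and $H_3\cong H^1=0$ by duality. Then $\chi=2$ forces $b_2=0$, and $H_2$ is torsion-free because $H_1=0$, so $H_2=0$. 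The Euler characteristic count is: $\chi(M)=0$, removing $S^1\times D^3$ and gluing $D^2\times S^2$ along $S^1\times S^2$ adds $\chi(D^2\times S^2)-\chi(S^1\times D^3)=2$.

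Finally, a simply connected closed 4-manifold with the homology of $S^4$ is a homotopy 4-sphere by Whitehead and Hurewicz. Freedman's theorem then provides the homeomorphism $\Sigma^\epsilon_{A_1\cdots A_k}\to S^4$ for each $\epsilon\in\Z/2$.
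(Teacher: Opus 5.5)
Your proposal is correct and follows the same route as the paper. Seifert--van Kampen kills the section class $t$; the condition $\det(A_l - I) = 1$ then kills each block of fiber generators; $\chi = 2$ together with simple connectivity pins down the homology of $S^4$; and Freedman's theorem gives the homeomorphism. You make explicit the linear-algebra and Hurewicz/Whitehead steps that the paper leaves implicit. The ``obstacle'' you flag is automatic, since relations of a presentation persist in every quotient and each block's relations involve only that block.
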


\begin{proof}The fundamental group of (\ref{New Homotopy 4-Spheres}) is trivial regardless of the choice of framing by Lemma \ref{Lemma Extension CS} and the Seirfert-van Kampen theorem since $\{A_1, \ldots, A_k\}$ are CS matrices. It is straight-forward to see that the Euler characteristic is $\chi(\Sigma^\epsilon_{A_1\cdots A_k}) = \chi(M_{A_1\cdots A_k}\setminus \nu(\alpha_k')) + \chi(D^2\times S^2) = 2$ for both choices $\epsilon \in \Z/2\Z$. Freedman's classification result \cite{[Freedman]} now implies Theorem \ref{Theorem Extension CS}.\end{proof}

\subsection{A brief discussion on other constructions of homotopy 4-spheres}\label{Section Generalizations}There are other sources of counter-examples to the Smooth 4D Poincar\'e Conjecture of a different nature to the ones produced in this paper. We finish this section with a brief description of them. In \cite[Section 3]{[Nash]}, Nash constructed a collection of homotopy 4-spheres by performing torus surgeries to a union $(T'\times T')\cup (T'\times T')$ where $T'$ is the punctured 2-torus $T' = T^2\setminus D^2$. He also studied a relation between his examples and the CS homotopy 4-spheres. Akbulut showed that a large subset of Nash's examples are diffeomorphic to the 4-sphere in \cite{[Akbulut2]}.

Calegari \cite{[Calegari]} discribed a construction of further examples of homotopy 4-spheres from fibered knots in $S^3$; see \cite[Section 2]{[ChaKim]}. Analogously to the CS homotopy 4-spheres, Calegari's examples are obtained by performing surgery along a section of $k\cdot (S^1\times S^2)$-bundles over $S^1$, where $k\cdot (S^1\times S^2)$ is the connected sum of $k$ copies of $S^1\times S^2$. Calegari's examples have been shown to be diffeomorphic to $S^4$ by Cha-Kim \cite{[ChaKim]}. 

There are other ways to produce even more homotopy 4-spheres and we now describe a couple of them. The examples collected in Theorem \ref{Theorem CS Examples} suggest the existence of further homotopy 4-spheres that arise from an extension of Calegari's construction to fibered links in the 3-sphere. Further possible counter-examples to the Smooth 4D Poincar\'e conjecture are also produced through surgery on a 4-manifold obtained from gluing the bundles produced in this paper along with those obtained from Nash's and/or Calegari's constructions.

\section{New homotopy real projective 4-spaces.}\label{Section New HRP4s}

The tools used in the proof of Theorem \ref{Theorem Infinite RP4s} are described in this section. We spend the first two sections recalling the original construction of homotopy $\RP^4$s due to Cappell-Shaneson.

\subsection{Cappell-Shaneson's construction of two nonorientable mapping tori}\label{Section Nonorientable CS Construction}Let $\varphi_B: T^3\rightarrow T^3 = \R^3/\Z^3$ be a self-diffeomorphism such that the induced map $(\varphi_B)_\ast: H_1(T^3)\rightarrow H_1(T^3)$ is given by a matrix $B\in \Gl(3; \Z)$ with determinant $\det(B) = - 1$ and that satisfies $\det(B^2 - I) = 1$. Any such matrix is caled a NCS matrix in the sequel. Aitchison-Rubinstein \cite[Lemma 5.2]{[AitchisonRubinstein]} have shown that thare are exactly two conjugacy classes of such matrices with representatives given by\begin{center}$B^\ast =\left( \begin{array}{ccc}
0 & 1 & 0   \\
0 & 0 & 1  \\
-1 & 1 & 0
\end{array} \right)$ and $B^{\ast \ast} =\left( \begin{array}{ccc}
0 & 1 & 0   \\
0 & 0 & 1  \\
-1 & 1 & 2
\end{array} \right)$.\end{center}

Construct the mapping torus\begin{equation}\label{Nonorientable Mapping Torus}M_B := T^3\times [0, 1]/\sim\end{equation}under the identification $(x, 0) \sim (\varphi_B(x), 1)$. The 4-manifold (\ref{Nonorientable Mapping Torus}) is the non-orientable total space of a $T^3$-bundle over $S^1$ with monodromy $B\in \Gl(3; \Z)$. A section of $M_B$ is denoted by\begin{equation}\label{Nonorientable Section}\alpha_B:= (\{x_0\}\times [0, 1])/\sim)\subset M_B.\end{equation}There are exactly two such mapping tori distinguished by the conjugacy classes of their monodromy. The homology and fundamental group of these 4-manifolds are collected in the next lemma, where the nontrivial 3-sphere bundle over the circle is denoted by $S^3\widetilde{\times} S^1$. 

\begin{lemma}\label{Lemma CSAT}Cappell-Shaneson \cite[Proposition 2.1 (a)]{[CappellShaneson]}. The aspherical 4-manifold put together in (\ref{Nonorientable Mapping Torus}) is a homology $S^3\widetilde{\times} S^1$ and its fundamental group has a presentation\begin{equation}\pi_1(M_B) = \langle x_1, x_2, x_3, t : tx_i t^{-1} = B x_i, [x_i, x_i] = 1\rangle\end{equation}for $i, j = 1, 2, 3$. The elements $\{x_1, x_2, x_3\}$ are homotopy classes of closed simple loops $\{\alpha_1, \alpha_2, \alpha_3\}$ in the 3-torus fiber of $M_B$ and $t$ is the homotopy class of a section (\ref{Nonorientable Section}).



\end{lemma}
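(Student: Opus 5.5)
The plan is to compute everything from the mapping torus structure of $M_B$. For asphericity, $M_B$ is the total space of a fibre bundle $T^3\to M_B\to S^1$ whose fibre and base are both aspherical. The long exact sequence of homotopy groups gives $\pi_n(M_B)=0$ for $n\geq 2$. It also gives a short exact sequence
\begin{equation*}1\rightarrow \pi_1(T^3)=\Z^3\rightarrow \pi_1(M_B)\rightarrow \Z\rightarrow 1,\end{equation*}
split by the section $\alpha_B$. Hence $\pi_1(M_B)\cong \Z^3\rtimes_B \Z$, where the generator $t=[\alpha_B]$ acts on $\Z^3$ through $(\varphi_B)_\ast=B$. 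This is exactly the stated presentation: the generators $x_1,x_2,x_3$ are the classes of the coordinate circles $\alpha_1,\alpha_2,\alpha_3$ in a fibre, they pairwise commute, and $tx_it^{-1}=Bx_i$. Equivalently, one can apply Seifert--van Kampen (an HNN extension) to $T^3\times[0,1]$ glued along its two ends via $\varphi_B$.

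For the homology I will use the Wang sequence of the fibration over $S^1$:
\begin{equation*}\cdots\rightarrow H_q(T^3)\xrightarrow{(\varphi_B)_\ast - I} H_q(T^3)\rightarrow H_q(M_B)\rightarrow H_{q-1}(T^3)\xrightarrow{(\varphi_B)_\ast - I} H_{q-1}(T^3)\rightarrow\cdots\end{equation*}
On $H_q(T^3)=\Lambda^q\Z^3$ the monodromy acts by $\Lambda^q B$. The degrees go as follows.
\begin{itemize}
\item $q=0$: the map is $1-1=0$.
\item $q=1$: the map is $B-I$.
\item $q=2$: the map is $\Lambda^2B - I$. Since $\Lambda^2 B=\det(B)\,(B^{-1})^{T}=-(B^{-1})^T$, this map is $-(B^{-1})^T-I$, whose determinant is $\pm\det(B+I)$ (using $\det B=-1$).
\item $q=3$: the map is $\det B - 1=-2$.
\end{itemize}
The hypothesis $\det(B^2-I)=\det(B-I)\det(B+I)=1$ forces $\det(B-I)=\pm1$ and $\det(B+I)=\pm 1$. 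So the maps in degrees $1$ and $2$ are isomorphisms, while degree $0$ contributes a kernel $\Z$ and degree $3$ contributes the cokernel $\Z/2$ with trivial kernel. Reading off the sequence gives
\begin{equation*}H_0=\Z,\quad H_1=\Z,\quad H_2=0,\quad H_3=0,\quad H_4=0.\end{equation*}
In degree $1$, $\mathrm{coker}(B-I)=0$ and the kernel from $H_0$ is $\Z$. In degrees $2$ and $3$ the relevant cokernels and kernels vanish, except that $\mathrm{coker}$ in degree $3$ is $\Z/2$, which lands in $H_3(M_B)$. I need to recheck this more carefully: $H_3(M_B)$ receives $\mathrm{coker}(\Lambda^3B-I)=\Z/2$ and $\ker(\Lambda^2B-I)=0$, so $H_3(M_B)=\Z/2$. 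Also $H_4(M_B)=\ker(\Lambda^3 B-I)=0$. These are precisely the integral homology groups of the nonorientable $S^3\widetilde{\times}S^1$, namely $\Z,\Z,0,\Z/2,0$. The identification $\Lambda^3B=\det B=-1$ is the same fact that makes $M_B$ nonorientable.

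The main obstacle is the bookkeeping in degree $2$: one has to justify that $\Lambda^2B-I$ is unimodular by relating it to $B+I$ through the cofactor identity, and to track signs coming from $\det B=-1$. I will verify this using the characteristic polynomial of $B$. Alternatively, I can check it directly on the two representatives $B^\ast,B^{\ast\ast}$ of Aitchison--Rubinstein, which suffices because conjugate monodromies give diffeomorphic mapping tori.
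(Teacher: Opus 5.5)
Your argument is correct and is the standard proof of the cited Cappell--Shaneson result; the paper gives no proof of its own and just quotes it. The homotopy exact sequence of $T^3\to M_B\to S^1$ gives asphericity and the split extension $\Z^3\rtimes_B\Z$, and the Wang sequence with $\Lambda^qB$ (using $\det(B\pm I)=\pm1$ and $\det(\Lambda^2B-I)=\det(-(B^{-1}+I)^T)=\det(B+I)$) gives $H_\ast(M_B)\cong\Z,\Z,0,\Z/2,0$, which is the homology of $S^3\widetilde{\times}S^1$. Fix the displayed list, which still says $H_3=0$ rather than the $\Z/2$ you correctly derive right after it. Your closing ``main obstacle'' paragraph is unnecessary, because the cofactor identity already settles degree $2$.
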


There is a $\Pin^+$-structure $(M_B, \phi_{M_B})$ on the tangent bundle of $M_B$ \cite[Corollary 6.4]{[Stolz]}, which enables the use of the $\eta$-invariant; see \cite[Section 4]{[Stolz]} for a description. Stolz used this spectral invariant to pin down the class of (\ref{Nonorientable Mapping Torus}) in the fourth $\Pin^+$-bordism group, which was computed to be isomorphic to $\Z/16\Z$ by Kirby-Taylor \cite[Theorem 5.2]{[KirbyTaylor]}.

\begin{proposition}\label{Proposition MT Stolz}Stolz \cite[Proposition 7.3]{[Stolz]}. There is a $\Pin^+$-structure $(M_B, \phi_{M_B})$ such that $\eta(M_B, \phi_{M_B}) = 1 \mod 2\Z$ and\begin{equation}\label{Pin Cobordism Class MB}[(M_B, \phi_{M_B})] = 8 \in \Omega^{\Pin^+}_4\cong \Z/16\Z.\end{equation} 
\end{proposition}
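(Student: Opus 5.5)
The plan is to follow Stolz's strategy: produce a $\Pin^+$-structure on $M_B$ and compute its $\eta$-invariant from the structure of $M_B$ as a mapping torus. Then read off the bordism class through the isomorphism $\Omega^{\Pin^+}_4\cong\Z/16\Z$ of Kirby--Taylor \cite[Theorem 5.2]{[KirbyTaylor]}. Under that isomorphism the $\eta$-invariant, suitably normalized, detects the class: $[(X,\phi)]\mapsto 8\eta(X,\phi)\bmod 16$ when $\eta$ is taken mod $2\Z$. In particular, $\eta=1\bmod 2\Z$ corresponds to the element $8$, so the second assertion follows from the first.

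\textbf{Existence of $\Pin^+$-structures.} By Lemma \ref{Lemma CSAT}, $M_B$ has the homology of $S^3\widetilde{\times}S^1$, so $H^1(M_B;\Z/2)=\Z/2$, generated by $w_1(M_B)$ (the monodromy reverses orientation). The tangent bundle splits as $TM_B\cong\pi^*TS^1\oplus T_{\mathrm{vert}}$, and the vertical bundle is flat with holonomy $B$ acting on $\R^3$. Its Stiefel--Whitney classes are therefore pulled back from the reduction of $B$ mod $2$. Because $\det(B^2-I)=1$, the mod-$2$ cohomology is small, and $w_2(M_B)=0$ follows; this is the content of \cite[Corollary 6.4]{[Stolz]}. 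The $\Pin^+$-structures then form a torsor over $H^1(M_B;\Z/2)=\Z/2$, so there are exactly two of them.

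\textbf{Computing $\eta$.} I would use the fibration structure. The $\Pin^+$ Dirac operator on the mapping torus is a suspension of the fiber data, and its $\eta$-invariant localizes to the fixed points of the monodromy. Concretely:
\begin{enumerate}
\item Pass to the double cover $M_{B^2}\to M_B$ together with the involution. Alternatively, apply an equivariant (Lefschetz / Atiyah--Bott type) fixed-point formula to the lift $\tilde\varphi_B$ of $\varphi_B$ to the spin bundle of $T^3$.
\item The fixed points of $\varphi_B$ on $T^3$ number $|\det(B-I)|$. For $B^\ast$ and $B^{\ast\ast}$ this is $1$, since $\det(B-I)\det(B+I)=\det(B^2-I)=1$.
\item The $\eta$-invariant is therefore a single local contribution from one isolated fixed point, determined by the eigenvalues of $B$ on the tangent space and the choice of lift on the spinor bundle. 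This contribution is of the form $\pm\frac{1}{2}\prod(\ldots)$, and evaluating it gives $\eta=1\bmod 2\Z$ for one of the two $\Pin^+$-structures.
\end{enumerate}

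The main obstacle is this local computation. One has to fix the correct lift of $\varphi_B$ to the $\Pin^+$ bundle and pin down the sign conventions, so that the answer is $1$ and not $\pm\frac12$ or $0$. As a cross-check I would compare with $\RP^4$, for which $\eta=\pm\frac18$ and whose class is a generator $\pm1$. I would also use the fact that $M_B$ is a homotopy $\RP^4\cs\ldots$ candidate: the Cappell--Shaneson fake $\RP^4$ is obtained from $M_B$ by surgery, and it must differ from $\RP^4$ by $8$ in $\Omega^{\Pin^+}_4$. This gives independent confirmation of the value $8$.
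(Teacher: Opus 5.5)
\textbf{Verdict: genuine gap.} The paper does not prove this proposition; it quotes Stolz, so your sketch has to stand on its own.

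\textbf{What works.} Granting Stolz's theorem that $\eta$ is a homomorphism $\Omega_4^{\Pin^+}\to\R/2\Z$ with $\eta(\RP^4,\pm\phi)=\pm\frac18$ on the generator, the value $\eta=1$ does force the class $8$. Your existence argument is also correct: the Wang sequence gives $H^2(M_B;\Z/2)=0$, since $B\pm I$ are invertible mod $2$, so $w_2(M_B)=0$.

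\textbf{The gap.} The one substantive step is $\eta(M_B,\phi)=1$, which you leave as ``the main obstacle''. The mechanism you propose for it does not exist here, for several reasons.
\begin{itemize}
\item An Atiyah--Bott formula needs a graded elliptic complex and a lift of the map commuting with it. $T^3$ is odd-dimensional, so there is no graded Dirac complex and no Lefschetz number to localize.
\item $\varphi_B$ reverses orientation, so even an isometric lift would anticommute with $D_{T^3}$.
\item No map homotopic to $\varphi_B$ is an isometry of any metric. Isometry groups of closed manifolds are compact and act on $H_1$ through finite groups, while $B$ has infinite order.
\item More basically, $\eta$ of a mapping torus is not a Lefschetz number. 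The localization results that do exist for $\eta$ (Donnelly's equivariant Atiyah--Patodi--Singer theorem) need a finite isometric group action extending over a bounding manifold, and $M_B$ comes with none.
\end{itemize}
So the ``single local contribution $\pm\frac12\prod(\ldots)$'' is undefined, and nothing in your sketch produces the number $1$. The hypothesis $\det(B-I)=\pm1$ does not enter by counting fixed points. Its role is to make $H^1(M_B;\Z/2)=\Z/2$ and to pin down the spin structure on the fiber.

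\textbf{The cross-check is circular.} The bordism $Q\sim\RP^4\sqcup M_B$, which the paper itself uses for $Q_r$, makes ``$[Q]-[\RP^4]=8$'' equivalent to the proposition. It cannot serve as independent confirmation.

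\textbf{Where the value actually comes from.} It is carried by the fiber. Either $\Pin^+$-structure restricts on a fiber $T^3$ (dual to $w_1$, with trivial normal bundle) to a $\varphi_B$-invariant spin structure. Because $B-I$ is invertible mod $2$, this must be the Lie-group (translation-invariant) one. For a flat metric, its Dirac operator has symmetric spectrum and kernel the constant spinors, and its Rokhlin invariant is $8$.

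\textbf{One way to close the gap.}
\begin{itemize}
\item Take a $T^3$-bundle over a pair of pants with monodromies $-I$ and $-B$, hence $B$ around the third boundary circle. It has $w_2=0$, so it is a $\Pin^+$-bordism giving $[M_B]=\pm[M_{-I}]\pm[M_{-B}]$ for suitable structures.
\item $M_{-B}$ is an oriented spin mapping torus of signature $0$, hence zero in $\Omega_4^{\Pin^+}$.
\item $M_{-I}$ is flat. With the induced structure (Lie on the fiber), separation of variables shows that every nonzero Fourier mode on the fiber contributes symmetric spectrum. Only the constant spinors contribute, and they give $\eta=1$.
\end{itemize}
Some reduction of this kind, or an adiabatic-limit argument along the base circle, is the idea your proposal is missing.
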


Since $H^1(M_B; \Z/2\Z) = \Z/2\Z$, there are exactly two $\Pin^+$-structures on $M_B$ \cite[Corollary 6.4]{[Stolz]}. Proposition \ref{Proposition MT Stolz} holds for both structures.

\subsection{Cappell-Shaneson homotopy $\RP^4$s}\label{Section CS RP4s}Using the mapping tori described in the previous section, Cappell-Shaneson \cite{[CappellShaneson]} constructed a closed 4-manifold $Q_1$ that is not diffeomorphic to the real projective 4-space $\RP^4$ even after taking a connected sum with an arbitrary number of copies of $S^2\times S^2$ as follows. Let $\alpha\subset \RP^4$ be a closed simple loop whose homotopy class generates the group $\pi_1(\RP^4) = \Z/2\Z$. Cappell-Shaneson assemble a 4-manifold\begin{equation}\label{CS RP4}Q:= (\RP^4\setminus \nu(\alpha))\cup (M_B\setminus \nu(\alpha_B)),\end{equation}where $\alpha_B\subset M_B$ is the loop (\ref{Nonorientable Section}). The diffeomorphism used to identify the compact pieces along their boundaries in (\ref{CS RP4}) is chosen so that it matches the induced $\Pin^+$-structures on the common $S^2\widetilde{\times} S^1$ boundaries \cite[p. 160]{[Stolz]}, \cite{[AitchisonRubinstein], [CappellShaneson]}.

\begin{remark}One of the two s-cobordism classes of 4-manifolds homeomorphic to $\RP^4$ is realized by the standard smooth structure, while the other one is realized by (\ref{CS RP4}) with either choice of matrix $B$.\end{remark}

Lemma \ref{Lemma CS} and the Seifert-van Kampen theorem imply that $\pi_1(Q) = \Z/2\Z$. Cappell-Shaneson show that there is a simple homotopy equivalence $Q \rightarrow \RP^4$ in \cite[Theorem 3.1]{[CappellShaneson]}. Using Freedman's results, Ruberman \cite{[Ruberman]} showed that the 4-manifold (\ref{CS RP4}) is homeomorphic to $\RP^4$. Akbulut \cite{[Akbulut1]} and Gompf \cite{[Gompf1], [Gompf3]} proved that the universal cover of $Q$ is diffeomorphic to the 4-sphere. We summarize the foundational work of these mathematicians in the next theorem. 

\begin{theorem}\label{Theorem Raw Material}Cappell-Shaneson, Ruberman, Akbulut, Gompf. There is a closed nonorientable 4-manifold $Q$ that satisfies the following properties. 
\begin{itemize}
\item There is a homeomorphism $Q\rightarrow \RP^4$.
\item There is no diffeomorphism $Q\cs (n - 1)\cdot(S^2\times S^2)\rightarrow \RP^4\cs (n - 1)\cdot(S^2\times S^2)$ for any $n\in \N$.
\item The universal cover of $Q$ is diffeomorphic to $S^4$. 
\end{itemize}
\end{theorem}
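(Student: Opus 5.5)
We take $Q$ to be the Cappell-Shaneson 4-manifold (\ref{CS RP4}), built from either NCS matrix $B$ with the gluing matching the $\Pin^+$-structures, and verify the three properties in turn, citing the literature summarized above.

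\emph{Homeomorphism.} By Lemma \ref{Lemma CSAT} and the Seifert-van Kampen theorem, removing $\nu(\alpha_B)$ kills $t$. Since $\det(B^2 - I) = 1$, the relations $x_i = Bx_i$ then force $\pi_1(M_B\setminus\nu(\alpha_B))$ to collapse, and gluing to $\RP^4\setminus\nu(\alpha)$ gives $\pi_1(Q) = \Z/2$. The map $\det(B^2-I)$ being a unit is what makes $B - I$ invertible modulo the relevant identifications. An Euler characteristic count, together with $M_B$ being a homology $S^3\widetilde{\times} S^1$, shows that $Q$ has the homology of $\RP^4$ with twisted coefficients. We then invoke \cite[Theorem 3.1]{[CappellShaneson]} for a simple homotopy equivalence $Q\to\RP^4$. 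Ruberman's theorem \cite{[Ruberman]}, which rests on Freedman's surgery for the good group $\Z/2$, upgrades this to a homeomorphism.

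\emph{Stable non-diffeomorphism.} This is the main point, and we use the $\eta$-invariant. Both $Q$ and $\RP^4$ admit $\Pin^+$-structures, and $\Omega_4^{\Pin^+}\cong\Z/16$ is detected by $\eta$ \cite{[KirbyTaylor], [Stolz]}. Because the gluing in (\ref{CS RP4}) matches $\Pin^+$-structures, there is a $\Pin^+$-bordism, namely the boundary connected sum of cobordisms along the glued region, giving
\[ [Q] = [\RP^4] + [M_B] - [S^1\times S^3\text{-type piece}]. \]
Here the loop surgery contributes trivially. Hence $[Q] = [\RP^4] + 8 = \pm 2 + 8 \neq \pm 2$ by Proposition \ref{Proposition MT Stolz}, since $[\RP^4,\phi] = \pm 2$ for its two structures. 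Adding $S^2\times S^2$ leaves the bordism class unchanged, since $S^2\times S^2$ is spin-nullbordant, and it also leaves $H^1(\,\cdot\,;\Z/2)$, and hence the set of $\Pin^+$-structures, unchanged. A diffeomorphism $Q\cs(n-1)\cdot(S^2\times S^2)\to\RP^4\cs(n-1)\cdot(S^2\times S^2)$ would therefore carry the two $\Pin^+$-structures, whose classes are $\{\pm2+8\}$, bijectively onto the two structures with classes $\{\pm 2\}$, which is a contradiction. The main obstacle is verifying that the surgery/gluing bordism respects the $\Pin^+$-structures. Here we rely on Stolz \cite[p.~160]{[Stolz]}, which states precisely that the gluing map is chosen to extend the structures.

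\emph{Universal cover.} The double cover of $Q$ is obtained from the orientation double cover of $M_B$, which is the $T^3$-bundle with monodromy $B^2$. Since $\det(B^2-I)=1$ and $\det B^2 = 1$, the matrix $B^2$ is a CS matrix, and the cover is surgered along the lifted section. So $\widetilde Q$ is a CS homotopy sphere $\Sigma^\epsilon_{B^2}$ (Definition \ref{Definition CS 4-Sphere}). Its standardness is precisely the content of Akbulut \cite{[Akbulut1]} and Gompf \cite{[Gompf1], [Gompf3]}, who showed via handle-calculus that these specific spheres are diffeomorphic to $S^4$. We cite this result and do not reprove it.
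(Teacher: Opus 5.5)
Your route is the paper's. You use Cappell--Shaneson's simple homotopy equivalence plus Ruberman for the homeomorphism, and Stolz's $\eta$/$\Pin^+$-bordism values (Theorem~\ref{Theorem Stolz}) for stable non-diffeomorphism. For the universal cover you identify $\widetilde Q$ with the CS sphere $\Sigma^\epsilon_{B^2}$ and cite Akbulut--Gompf. Your conclusions are right, but two intermediate claims are false, and your description of the bordism is muddled.

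\emph{The $\pi_1$ computation.} Removing $\nu(\alpha_B)$ does not kill $t$. Deleting a circle from a 4-manifold leaves $\pi_1$ unchanged, by codimension three; the paper uses exactly this in the proof of Lemma~\ref{Lemma Extension CS}. Worse, if $t$ were trivial in $\pi_1(M_B\setminus\nu(\alpha_B))$, van Kampen would force the generator of $\pi_1(\RP^4\setminus\nu(\alpha))\cong\Z/2$ to be trivial, and you would get $\pi_1(Q)=1$. The correct mechanism runs through the boundary: $\pi_1(S^2\widetilde{\times}S^1)\cong\Z$ is generated by a longitude that maps to $t$ on one side and to the generator of $\Z/2$ on the other. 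So the gluing imposes $t^2=1$. Then $x=t^2xt^{-2}=B^2x$ in the abelian subgroup generated by the $x_i$, and $\det(B^2-I)=1$ kills the $x_i$. This is where the hypothesis $\det(B^2-I)=1$ actually enters. The error is not load-bearing, since you take the homeomorphism from Cappell--Shaneson and Ruberman anyway.

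\emph{The bordism classes.} You have $[\RP^4,\pm\phi]$ wrong: it is $\pm1$, not $\pm2$. $\RP^4$ generates $\Omega_4^{\Pin^+}\cong\Z/16$, consistent with $\eta(\RP^4,\pm\phi)=\pm\frac18$. Hence $[Q,\pm\phi_Q]=\pm1+8=\pm9$, so $\eta$ takes the values $\pm\frac78 \bmod 2\Z$ on $Q$, as in Theorem~\ref{Theorem Stolz}. Since $\{\pm9\}\cap\{\pm1\}=\emptyset$, your contradiction survives.

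\emph{The bordism itself.} There is no ``$S^1\times S^3$-type'' correction term, and the bordism is not a boundary connected sum. Attach the round handle $(S^1\widetilde{\times}D^3)\times[0,1]$ to $(\RP^4\sqcup M_B)\times[0,1]$ along $\nu(\alpha)\times\{1\}$ and $\nu(\alpha_B)\times\{1\}$. The new boundary component is $Q$. Hence $[Q]=[\RP^4]+[M_B]$, provided the $\Pin^+$-structures extend over the handle. That extension is exactly what the matching condition on the gluing map guarantees.
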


Instead of Cappell-Shaneson's invariant \cite{[CappellShaneson]}, we will use the $\eta$-invariant to distinguish smooth structures in this paper; the reader is directed towards \cite[Sections 3, 4, and 5]{[Stolz]} for a description of this spectral invariant. Recall that two 4-manifolds $X$ and $Y$ are stably diffeomorphic if there is an integer $N\in \Z_{\geq 0}$ such that $X\cs N\cdot(S^2\times S^2)$ is diffeomorphic to $Y\cs N\cdot(S^2\times S^2)$. Stolz \cite[Theorem B]{[Stolz]} showed that the Euler characteristic and the $\eta$-invariant determine the stable diffeomorphism class of closed nonorientable 4-manifolds with fundamental group of order two that admit a $\Pin^+$-structure. We collect the corresponding values for the smooth structures discussed so far in the next theorem.

\begin{theorem}\label{Theorem Stolz}Stolz \cite{[Stolz]}. The values of the $\eta$-invariant for $\RP^4$ and $Q$ are\begin{equation*}\eta(\RP^4, \pm \phi_{\RP^4}) = \pm \frac{1}{8} \mod 2\Z
\end{equation*}and\begin{equation*}\eta(Q, \pm \phi_{Q}) = \pm \frac{7}{8} \mod 2\Z.\end{equation*}

In particular, there is no diffeomorphism\begin{equation*}Q\cs (n - 1)\cdot(S^2\times S^2)\rightarrow \RP^4\cs (n - 1)\cdot(S^2\times S^2)\end{equation*} for any $n\in \N$.
\end{theorem}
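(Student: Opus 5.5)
The computation has two parts: the $\eta$-invariants of $\RP^4$ and of $Q$. Once these are in hand, the non-diffeomorphism statement follows formally.

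For $\RP^4$ I would take Stolz's explicit value. Realize $\RP^4$ as the boundary-free quotient of $S^4$ by the antipodal map. The $\eta$-invariant of the twisted Dirac operator for its two $\Pin^+$-structures is then computed by the equivariant $\eta$-invariant formula for spherical space forms, a Lefschetz-type sum over the nontrivial group element. That sum gives $\pm\frac{1}{8}\bmod 2\Z$, and the sign flips with the $\Pin^+$-structure, since $H^1(\RP^4;\Z/2)=\Z/2$ gives exactly two structures $\pm\phi_{\RP^4}$. Equivalently, $[\RP^4]$ generates $\Omega_4^{\Pin^+}\cong\Z/16$ (Kirby--Taylor), and $\eta$ (normalized so that it is additive and defines the isomorphism $\Omega_4^{\Pin^+}\to\frac18\Z/2\Z$) takes the value $\pm\frac18$ on it.

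For $Q$ I would use the cut-and-paste description (\ref{CS RP4}). The gluing matches $\Pin^+$-structures on the common $S^2\widetilde{\times}S^1$ boundary. So $Q$ is $\Pin^+$-bordant to the disjoint union $\RP^4\sqcup M_B$ after surgery on $\alpha$ and $\alpha_B$. Concretely, there is a $\Pin^+$-bordism from $\RP^4\sqcup M_B$ to $Q$ given by attaching a $1$-handle joining the two pieces, followed by a $2$-handle along the loop $\alpha\cdot\alpha_B$, which is the band-sum realizing the gluing. This works provided the $\Pin^+$-structures extend across the handles, which is exactly the matching condition on the boundary. Bordism invariance and additivity of $\eta$ modulo $2\Z$ then give
\[
\eta(Q,\pm\phi_Q)=\eta(\RP^4,\pm\phi_{\RP^4})+\eta(M_B,\phi_{M_B})=\pm\tfrac18+1\equiv\pm\tfrac78 \pmod{2\Z},
\]
using Proposition \ref{Proposition MT Stolz}, which holds for both structures on $M_B$. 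Up to the sign conventions this is $\mp\frac{9}{8}\equiv\pm\frac78$. In bordism terms, $[Q]=\pm1+8=\pm 9\equiv\mp 7\in\Z/16$.

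For the final statement, connected sum with $S^2\times S^2$ is $\Pin^+$-bordant to the original manifold, because $S^2\times S^2$ bounds and is spin. Hence $\eta$ is unchanged by stabilization. A diffeomorphism $Q\cs(n-1)(S^2\times S^2)\to\RP^4\cs(n-1)(S^2\times S^2)$ would pull back the two $\Pin^+$-structures on the target to the two on the source, since $H^1(\,\cdot\,;\Z/2)=\Z/2$. That would force $\{\pm\frac78\}=\{\pm\frac18\}$ modulo $2\Z$, which is false. Alternatively, one can quote Stolz's Theorem B directly.

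The main obstacle is the bordism step for $Q$: verifying that the handle attachments carry compatible $\Pin^+$-structures. The $2$-handle framing must be the one prescribed by the gluing diffeomorphism. I would handle this by citing the matching of $\Pin^+$-structures on $S^2\widetilde{\times}S^1$ from \cite[p.~160]{[Stolz]}.
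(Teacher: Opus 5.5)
Your argument is correct and matches the paper's approach. The paper cites Stolz for this statement, and in Proposition~\ref{Proposition Values Eta} it runs the same steps. These are the value $\pm\frac18$ for $\RP^4$; the $\Pin^+$-bordism from $\RP^4\sqcup M_B$ to $Q$ (your $1$-handle plus $2$-handle is the round-handle description of the gluing); $\eta(M_B)=1$ from Proposition~\ref{Proposition MT Stolz} together with additivity; and invariance of $\eta$ under $\cs(S^2\times S^2)$, with a diffeomorphism permuting the two $\Pin^+$-structures.

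One cosmetic point: with correlated signs, $\pm\frac18+1\equiv\pm\frac98\equiv\mp\frac78 \pmod{2\Z}$, which agrees with your own $[Q]=\pm9\equiv\mp7$. So your $\eta$ display is only valid as an equality of the sets $\{\pm\frac78\}$, which is all the non-diffeomorphism step uses.
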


A couple of words are in order regarding the notation used in Theorem \ref{Theorem Stolz} for the $\Pin^+$-structures on the homotopy $\RP^4$s. There is a bijection between $H^1(X; \Z/2)$ and the number of $\Pin^+$-structures on a closed nonorientable 4-manifold $X$ with $w_2(X) = 0$ \cite[Lemma 1]{[HambletonKreckTeichner]}, \cite[Corollary 6.4]{[Stolz]}. If $H^1(X; \Z/2) = \Z/2$, there are two such structures and they are mutual inverses in the fourth $\Pin^+$-bordism group $\Omega^{\Pin^+}_4$ \cite[p. 190]{[KirbyTaylor]}.

The inequivalent smooth structure on $\RP^4$ can be phrased as an inequivalent smooth structure on the compact 4-manifold\begin{equation}\label{Tubular Neighborhood RP2}D^2\widetilde{\times} \RP^2 = \frac{D^2\times S^2}{(d, x)\sim (-d, \mathbb{A}(x))}\end{equation}for $(d, x)\in D^2\times S^2$ where $\mathbb{A}: S^2\rightarrow S^2$ is the antipodal involution. The 4-manifold (\ref{Tubular Neighborhood RP2}) is the twisted nonorientable 2-disk bundle over $\RP^2$ with boundary\begin{equation}\partial(D^2\widetilde{\times} \RP^2) = S^2\widetilde{\times} S^1.\end{equation}Let $\alpha_{Q}\subset Q$ be a closed simple orientation-reversing loop whose homotopy class generates $\pi_1(Q) = \Z/2\Z$. The compact 4-manifold $Q\setminus \nu(\alpha_Q)$ is homeomorphic, but not diffeomorphic to $D^2\widetilde{\times} \RP^2$, i.e., it is an inequivalent smooth structure on (\ref{Tubular Neighborhood RP2}).





\subsection{Further nonorientable mapping tori}\label{Section Extension Mapping Tori}We use the two 4-manifolds constructed in (\ref{Nonorientable Mapping Torus}) in this section to assemble a set of $(T^3\cs \cdots \cs T^3)$-bundles over $S^1$ with fiber a prescribed number of connected sums of 3-tori using the procedure of Section \ref{Section Extension CS Mapping Tori}. Let\begin{equation}\label{NCS Matrices Collection}\{B_1, \ldots, B_r\}\end{equation}be a collection of NCS matrices (see Section \ref{Section Nonorientable CS Construction}) and let\begin{equation}\label{NCS Collection}\{M_{B_1}, \ldots, M_{B_r}\}\end{equation}be the corresponding mapping tori associated to the NCS matrices (\ref{NCS Matrices Collection}). Let $M_{B_1\cdots B_r}$ be the closed oriented 4-manifold obtained from inductively gluing together each element of (\ref{NCS Collection}) along a section of $M_{B_i}$ for $1\leq i \leq r$ as follows. Let $\alpha_1\subset M_{B_1}$ be a section as in (\ref{Nonorientable Section}) and $\alpha_i, \alpha_i'\subset M_{B_i}$ be disjoint sections for $2\leq i \leq r$. Assemble the 4-manifold\begin{equation}\label{Generalized NMT}M_{B_1\cdots B_r}:= (M_{B_1}\setminus \nu(\alpha_1))\cup (M_{B_2}\setminus \nu(\alpha_2)\sqcup \nu(\alpha_2'))\cup \cdots \cup (M_{B_r}\setminus \nu(\alpha_r))\end{equation}that contains a closed simple orientation-reversing loop\begin{equation}\label{Section NGeneralized}\alpha_r'\subset M_{B_1\cdots B_r}.\end{equation}Every gluing diffeomorphisms in (\ref{Generalized NMT}) is chosen so that it preserves the bundle structure of the common $S^2\widetilde{\times} S^1$ boundaries. 

The canonical extension of Lemma \ref{Lemma Extension CS} to this setting is as follows.

\begin{lemma}\label{Lemma Generalized MT}The 4-manifold $M_{B_1\cdots B_k}$ defined in (\ref{Generalized NMT}) is a $(r\cdot T^3)$-bundle over $S^1$ with sections isotopic to the loop (\ref{Section NGeneralized}) and it has the homology of $S^3\widetilde{\times} S^1$. Its fundamental group has a presentation with generators\begin{equation}\label{Homotopy classes Nfibers}\{t, \{x^1_1, x^1_2, x^1_3\}, \cdots, \{x^r_1, x^r_2, x^r_3\}\}\end{equation} subject to the relations\begin{equation}\label{Generalized Nrelations}\{tx^l_it^{-1} = Bx^l_i: 1\leq i \leq 3, 1\leq l \leq r\}\end{equation}as well as $[x^l_i, x^l_j] = 1$. The elements\begin{equation}\{\{x^1_1, x^1_2, x^1_3\}, \cdots, \{x^r_1, x^r_2, x^r_3\}\}\end{equation}are homotopy classes of closed simple loops\begin{equation}\{\{\alpha^1_1, \alpha^1_2, \alpha^1_3\}, \ldots, \{\alpha^r_1, \alpha^r_2, \alpha^r_3\}\}\end{equation}in the $(r\cdot T^3)$-fiber of $M_{B_1\cdots B_k}$ and $t$ is the homotopy class of the closed simple loop $\alpha_k'\subset M_{B_1\cdots B_r}$
\end{lemma}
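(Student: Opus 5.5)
The proof will mirror that of Lemma \ref{Lemma Extension CS}, with Lemma \ref{Lemma CSAT} as the input. The one new ingredient is control of the nonorientable boundary $S^2\widetilde{\times} S^1$ along which the pieces are glued. We argue by induction on $r$; the case $r = 1$ is Lemma \ref{Lemma CSAT}.

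\textbf{Bundle structure.} Each section $\alpha_i$ is an orientation-reversing loop meeting every fiber once. Hence $\nu(\alpha_i)$ is a $D^3$-bundle over $S^1$ with orientation-reversing monodromy, and $M_{B_i}\setminus \nu(\alpha_i)$ is a $(T^3\setminus D^3)$-bundle over $S^1$ with boundary the $S^2$-bundle $S^2\widetilde{\times} S^1$. By hypothesis the gluing maps in (\ref{Generalized NMT}) preserve this $S^2$-bundle structure. Gluing fiberwise, the fiber over each point of $S^1$ is $(T^3\setminus D^3)\cup_{S^2}\cdots\cup_{S^2}(T^3\setminus D^3) = r\cdot T^3$. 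This exhibits $M_{B_1\cdots B_r}$ as an $(r\cdot T^3)$-bundle over $S^1$. The parallel section $\alpha_r'$ in the last piece avoids all the surgered regions, so it is a section of the new bundle.

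\textbf{Fundamental group.} Removing a loop, which has codimension three, does not change $\pi_1$, by the general position argument used for Lemma \ref{Lemma Extension CS}. The gluing region $S^2\widetilde{\times} S^1$ has $\pi_1 = \Z$, generated by the section direction. Seifert--van Kampen then amalgamates the groups of Lemma \ref{Lemma CSAT} over $\Z$, identifying all the $t$'s to a single $t$. This yields the generators (\ref{Homotopy classes Nfibers}) and the relations (\ref{Generalized Nrelations}), where $t$ acts on the $l$-th block by the matrix $B_l$ (the statement's $B$ should be read blockwise). Since $\alpha_r'$ represents $t$ and homotopy determines isotopy for loops in 4-manifolds \cite[Example 4.1.3]{[GompfStipsicz]}, every section is isotopic to $\alpha_r'$.

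\textbf{Homology.} I would use the Wang sequence of the fibration. We have $H_1(r\cdot T^3) = \Z^{3r}$, and the monodromy acts by $\mathrm{diag}(B_1,\ldots,B_r)$. For $\det(B_l) = -1$, the identity $\det(B_l^2 - I) = 1$ factors as $\det(B_l - I)\det(B_l+I) = 1$, so both factors are $\pm1$. Hence $B_l - I$ is invertible over $\Z$, and so is the block-diagonal map $\mathrm{diag}(B_l) - I$. This gives $H_1 = \Z$. On $H_2(r\cdot T^3) = \Z^{3r}$, Poincaré duality makes the monodromy act by $\pm (B_l^{-1})^{T}$, twisted by the orientation character $-1$. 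Here $\det(B_l^{-T} \pm I)$ is again a unit up to sign, by the same factorization. On $H_3$ the monodromy acts by $-1$ because the fiber orientation is reversed, so $1-(-1) = 2$ and $H_3 = \Z/2$ or $0$ as for $S^3\widetilde{\times} S^1$.

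\textbf{Main obstacle.} I expect the main obstacle to be this last computation, which requires checking the signs carefully against the homology of $S^3\widetilde{\times} S^1$. The alternative is a Mayer--Vietoris argument gluing the pieces along $S^2\widetilde{\times} S^1$, using Lemma \ref{Lemma CSAT} for each piece. I would try Mayer--Vietoris first, since each $M_{B_i}\setminus\nu(\alpha_i)$ has homology computable from $M_{B_i}$ by excision.
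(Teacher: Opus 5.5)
Your route is the paper's. The paper gives no separate argument for this lemma and presents it as the canonical extension of Lemma~\ref{Lemma Extension CS}: fiberwise assembly, codimension-three general position, Lemma~\ref{Lemma CSAT} plus Seifert--van Kampen, and homotopy implies isotopy for loops. Your blockwise reading of $B$ as $B_l$ is correct. Your Wang sequence computation supplies the homology claim, which the paper leaves unjustified. You need not hedge at the end. The monodromy acts on the $l$-th block of $H_2(r\cdot T^3)$ by $\Lambda^2 B_l = -(B_l^{-1})^T$, and $\det(-(B_l^{-1})^T - I) = \det(B_l + I) = \pm 1$. So the sequence gives exactly $H_3 = \Z/2$ and $H_4 = 0$.

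One asserted step is false: that \emph{every} section is isotopic to $\alpha_r'$. Your inference needs every section to represent a conjugate of $t$, and this fails for $r \geq 2$. Let $G$ be the fiber group and $\Phi$ the monodromy. Free homotopy classes of sections correspond to $\Phi$-twisted conjugacy classes $g \sim h g \Phi(h)^{-1}$ in $G$, or equivalently to conjugacy classes of $\pi_1$ lying in the coset $Gt$.

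For a single $T^3$ there is only one such class, since $\Z^3/(I - B_i)\Z^3 = 0$. That fact is true inside each piece $M_{B_i}$, and it is what your van Kampen step needs: it shows that $\alpha_i$ and $\alpha_i'$ both carry conjugates of $t_i$, so all the $t$'s can be identified.

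The fact does not survive the fiber sum. Take $G = \Z^3 * \cdots * \Z^3$, $\Phi = B_1 * \cdots * B_r$, $a = x^1_1$, $c = x^2_1$, and $g_n = (ac)^n$.
\begin{itemize}
\item Since $\Phi$ preserves the free factors, it induces an automorphism $\widetilde{\Phi}$ of the Bass--Serre tree that fixes the base edge and satisfies $\widetilde{\Phi}\, h = \Phi(h)\,\widetilde{\Phi}$.
\item Then $h\,(g\widetilde{\Phi})\,h^{-1} = (h g \Phi(h)^{-1})\,\widetilde{\Phi}$, so the translation length of $g\widetilde{\Phi}$ is an invariant of the twisted class of $g$.
\item For $g_n$ this length is $2n$, because $g_n\Phi(g_n)\cdots\Phi^{k-1}(g_n)$ is a reduced word of syllable length $2nk$.
\end{itemize}
Hence the sections representing $g_n t$, $n \geq 0$, are pairwise non-homotopic. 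Only the one with $n = 0$ is homotopic, let alone isotopic, to $\alpha_r'$.

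What is true, and what the paper actually asserts (see the last sentence of the proof of Lemma~\ref{Lemma Extension CS}), is that $\alpha_r'$ is isotopic to a section. Your bundle paragraph already proves this, so simply drop the claim about every section.
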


We finish this section by recording the values of the $\eta$-invariant of the mapping tori (\ref{Generalized NMT}) and their dependence on the value of $r\in \N$.

\begin{proposition}\label{Proposition Pin Structures Mapping Torus Generalized}There are $\Pin^+$-structures $(M_{B_1\cdots B_r}, \pm \phi_{M_{B_1\cdots B_r}})$ such that\begin{equation}\label{Two Pin Structures Mapping Torus}
  \eta(M_{B_1\cdots B_r}, \pm \phi_{M_r}) = \begin{cases}
    0 \mod 2\Z, & \text{if $r$ is even.}\\
   1 \mod 2\Z, & \text{if $r$ is odd}.
  \end{cases}
\end{equation}\end{proposition}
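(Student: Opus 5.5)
The plan is to compute the $\Pin^+$-bordism class of $M_{B_1\cdots B_r}$ and to read off the $\eta$-invariant from it, using Proposition \ref{Proposition MT Stolz} as input. The guiding observation is that the assembly (\ref{Generalized NMT}) is a fiber connected sum along sections. Removing a neighborhood of a section from each $M_{B_i}$ and gluing the pieces along the common $S^2\widetilde{\times} S^1$ boundaries is a "circle sum". This should be $\Pin^+$-bordant to the disjoint union $M_{B_1}\sqcup\cdots\sqcup M_{B_r}$, or to a disjoint union differing from it by copies of a null-bordant piece. Concretely, I would build an explicit $\Pin^+$-cobordism $W$ from $\bigsqcup_i M_{B_i}$ to $M_{B_1\cdots B_r}$. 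Start with $\bigsqcup_i M_{B_i}\times[0,1]$ and attach $r-1$ copies of a "round handle" $D^3\widetilde{\times} S^1\times [0,1]$ along $\nu(\alpha_i')\times\{1\}$ and $\nu(\alpha_{i+1})\times\{1\}$. The other boundary component of $W$ is then the manifold (\ref{Generalized NMT}).

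First, I will check that the $\Pin^+$-structures extend over $W$. A $\Pin^+$-structure is determined by its restriction to the 2-skeleton together with $w_1$. The gluing maps in (\ref{Generalized NMT}) were chosen to preserve the bundle structure of the boundaries, and, as in \cite[p. 160]{[Stolz]}, they can be chosen to match the induced $\Pin^+$-structures on the $S^2\widetilde{\times} S^1$ boundaries. Since $H^1(S^2\widetilde{\times} S^1;\Z/2)=\Z/2$ and each $\alpha_i$ is orientation-reversing with the same $w_1$ value, we can pick $\phi_{M_{B_i}}$ on each summand (replacing it by $-\phi$ if necessary) so that the restrictions agree. The structure then extends across each round handle, because the handle retracts onto $S^1$ and the structures are compatible there.

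Second, bordism invariance and additivity give
\begin{equation*}[(M_{B_1\cdots B_r},\phi)] = \sum_{i=1}^r [(M_{B_i},\phi_{M_{B_i}})] = 8r \in \Omega^{\Pin^+}_4\cong \Z/16\Z.\end{equation*}
Here each summand equals $8$ by Proposition \ref{Proposition MT Stolz}, which holds for both structures because $8=-8$. Hence the class is $0$ if $r$ is even and $8$ if $r$ is odd.

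Third, I will invoke Stolz's identification of $\eta$ (mod $2\Z$) as a bordism invariant that realizes the isomorphism $\Omega^{\Pin^+}_4\to \Z/16$, $[X]\mapsto 8\eta(X)$ \cite{[Stolz]}. Under this identification the class $8$ corresponds to $\eta=1$ and the class $0$ to $\eta=0$ mod $2\Z$, which gives (\ref{Two Pin Structures Mapping Torus}) for both $\pm\phi$.

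The main obstacle is the first step. One must verify that the round-handle cobordism really carries a $\Pin^+$-structure restricting to the chosen ones, which depends on the framing and on the gluing of $S^2\widetilde{\times} S^1$. If that is delicate, my first fallback is a gluing formula for $\eta$. Cutting along the separating $S^2\widetilde{\times} S^1$ hypersurfaces, $\eta$ should be additive up to the contribution of the model piece $D^3\widetilde{\times} S^1$ (equivalently $S^3\widetilde{\times} S^1$), which bounds $D^4\widetilde{\times}S^1$ and hence contributes $0$.
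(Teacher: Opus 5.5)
Your proposal is correct and follows essentially the same route as the paper, whose proof likewise rests on a $\Pin^+$-bordism between $M_{B_1\cdots B_r}$ and $M_{B_1}\sqcup\cdots\sqcup M_{B_r}$, additivity under disjoint unions, and the value from Proposition \ref{Proposition MT Stolz}. You supply details the paper leaves implicit, namely the explicit round-handle cobordism and the inductive matching of the $\Pin^+$-structures along the chain, and you track the class $8r\in\Omega^{\Pin^+}_4\cong\Z/16$ before converting to $\eta$, which is equivalent to the paper's direct use of additivity of $\eta$.
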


\begin{proof}By \cite[Corollary 6.4]{[Stolz]}, there are two $\Pin^+$-structures $(M_{B_1\cdots B_r}, \pm \phi_{M_{B_1\cdots B_r}})$ given that $H^ 1(M_{B_1\cdots B_r}; \Z/2\Z) = H^1(S^3\widetilde{\times}S^1; \Z/2\Z) = \Z/2\Z$. The values (\ref{Two Pin Structures Mapping Torus}) hold by the existence of a $\Pin^+$-bordism $(V; M_{B_1\cdots B_r}, M_{B_1}\sqcup \cdots \sqcup M_{B_r})$, the values of Proposition \ref{Proposition MT Stolz} and the property of the $\eta$-invariant of being additive with respect to disjoint unions \cite[Section 7]{[Stolz]}.\end{proof}

\subsection{New homotopy $\RP^4$s}The Cappell-Shaneson construction can be altered to produce (for any $r > 0$) a 4-manifold $Q_r$ that is homeomorphic to $\RP^4$ using the mapping tori described in Section \ref{Section Extension Mapping Tori} as follows. Consider again the closed orientation-reversing simple loop  $\alpha\subset \RP^4$ whose homotopy class generates the fundamental group of the real projective 4-space. For a given $r\in \N$, assemble a 4-manifold\begin{equation}\label{Generalized CS RP4}Q_r:= (\RP^4\setminus \nu(\alpha))\cup (M_{B_1\cdots B_r})\setminus \nu(\alpha_r')),\end{equation}where $\alpha_r'\subset M_{B_1\cdots B_r}$ is the loop (\ref{Section NGeneralized}). The gluing diffeomorphism in (\ref{Generalized CS RP4}) is chosen so that it identifies the induced $\Pin^+$-structures on the common $S^2\widetilde{\times} S^1$ boundaries \cite[p. 160]{[Stolz]}( cf. \cite{[AitchisonRubinstein], [CappellShaneson]}).

\begin{lemma}\label{Lemma Homeomorphism}For every $r\in \N$, there is a homeomorphism $Q_r\rightarrow \RP^4$.
\end{lemma}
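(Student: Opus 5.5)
The plan is to follow the route that Ruberman took for the original Cappell-Shaneson manifold $Q = Q_1$: first show that $Q_r$ is homotopy equivalent to $\RP^4$, and then apply the topological classification of closed nonorientable 4-manifolds with fundamental group $\Z/2$ due to Ruberman \cite{[Ruberman]} and Hambleton-Kreck-Teichner \cite{[HambletonKreckTeichner]}. For $\pi_1(Q_r)=\Z/2$ and $w_2=0$, that classification says the homeomorphism type is determined by the Euler characteristic, $w_1$, and the Kirby-Siebenmann invariant, together with the $\Pin^+$-bordism class modulo the smooth/topological ambiguity. So the proof reduces to computing these invariants for $Q_r$ and checking that they agree with those of $\RP^4$.

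\textbf{Fundamental group.} I would apply the Seifert-van Kampen theorem to the decomposition (\ref{Generalized CS RP4}).
\begin{itemize}
\item The boundary $S^2\widetilde{\times} S^1$ has $\pi_1=\Z$.
\item This $\Z$ maps onto $\pi_1(\RP^4\setminus\nu(\alpha))=\pi_1(D^2\widetilde{\times}\RP^2)=\Z/2$.
\item On the other side, $\pi_1(M_{B_1\cdots B_r}\setminus\nu(\alpha_r'))\cong\pi_1(M_{B_1\cdots B_r})$ by the general position argument already used in Lemma \ref{Lemma Extension CS}. The meridian of $\alpha_r'$ is null-homotopic there, and the $\Z$ of the boundary maps to $t$.
\end{itemize}
Imposing $t^2=1$ in the presentation of Lemma \ref{Lemma Generalized MT} gives $x=B_l^2x$ for each block $l$. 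Since $\det(B_l^2-I)=\pm1$, the matrix $B_l^2-I$ is invertible over $\Z$, so every $x^l_i$ is killed. Hence $\pi_1(Q_r)=\Z/2$, generated by an orientation-reversing loop.

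\textbf{Homology and homotopy type.} Next compute the Euler characteristic:
\[
\chi(Q_r)=\chi(D^2\widetilde{\times}\RP^2)+\chi(M_{B_1\cdots B_r}\setminus\nu(\alpha_r'))=1+0=1 .
\]
The universal (double) cover of $Q_r$ is simply connected and has $\chi=2$, so it is a homotopy $S^4$. Moreover, it is the orientable analogue built from $\RP^4\setminus\nu(\alpha)$ and the double cover of the mapping-torus piece, whose monodromies $B_l^2$ are CS matrices. This is exactly the type of homotopy sphere in Theorem \ref{Theorem Extension CS}. A closed 4-manifold with $\pi_1=\Z/2$, nontrivial $w_1$, and universal cover a homotopy sphere is homotopy equivalent to $\RP^4$; one sees this via the classifying map $Q_r\to\RP^\infty$ compressed to $\RP^4$, which has degree one mod 2 with twisted coefficients. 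Cappell-Shaneson's argument \cite[Theorem 3.1]{[CappellShaneson]} for $Q$ goes through verbatim here, since it uses only these homological properties of the inserted piece.

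\textbf{Homeomorphism.} Ruberman's theorem (see also \cite{[HambletonKreckTeichner]}) says that any closed smooth 4-manifold homotopy equivalent to $\RP^4$ is homeomorphic either to $\RP^4$ or to a fake $\RP^4$ with nonzero Kirby-Siebenmann invariant. Since $Q_r$ is smooth, its Kirby-Siebenmann invariant vanishes, so $Q_r$ is homeomorphic to $\RP^4$. The two smooth s-cobordism classes are detected only by the $\eta$-invariant, which does not affect the topological type.

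\textbf{Main obstacle.} The hardest step is the homotopy-equivalence step: making sure that the gluing along $S^2\widetilde{\times} S^1$, which is chosen to match $\Pin^+$-structures, still produces a degree-one map to $\RP^4$ that is a homotopy equivalence. I would handle this by extending the identity on $\RP^4\setminus\nu(\alpha)$ over the other piece, collapsing $M_{B_1\cdots B_r}\setminus\nu(\alpha_r')$ onto $\nu(\alpha)\cong S^1\times D^3$. This works because that piece is a $\Z[\Z/2]$-homology $S^1\times D^3$: it has the twisted homology of a circle, by invertibility of $B_l^2-I$. The resulting map then induces an isomorphism on $\pi_1$ and on homology of universal covers, so by Whitehead's theorem it is a homotopy equivalence.
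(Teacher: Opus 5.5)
Your proof is correct, but after the first two steps it takes a different route from the paper.

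\textbf{Shared steps.} Both proofs compute $\pi_1(Q_r)=\Z/2$ by Seifert--van Kampen with the presentation of $\pi_1(M_{B_1\cdots B_r})$, and both compute $\chi(Q_r)=1$.

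\textbf{Where you diverge.} The paper goes straight from these two facts to the Hambleton--Kreck--Teichner classification. You instead insert an explicit homotopy-theoretic step:
\begin{enumerate}
\item The double cover is simply connected with $\chi=2$, hence a homotopy $S^4$.
\item The collapse map $F\colon Q_r\to\RP^4$ is the identity on $\RP^4\setminus\nu(\alpha)$ and is extended over $M_{B_1\cdots B_r}\setminus\nu(\alpha_r')$. The extension exists because $\nu(\alpha)$ is a $D^3$-bundle over the circle. $F$ is a $\pi_1$-isomorphism and a homeomorphism over the unchanged piece.
\item Its lift is therefore a degree-one map between homotopy 4-spheres, so $F$ is a homotopy equivalence.
\item Finally, a homotopy $\RP^4$ is homeomorphic to $\RP^4$ or to $*\RP^4$, and smoothness forces the Kirby--Siebenmann invariant to vanish.
\end{enumerate}

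\textbf{What each route buys.} Your route is explicit about which invariants matter. $\pi_1$ and $\chi$ alone do not separate $\RP^4$ from $*\RP^4$, so the paper's one-line appeal to HKT tacitly also needs the remaining invariants of that classification to agree with those of $\RP^4$: the $w_2$-type, the Kirby--Siebenmann invariant, and the $\Pin$/Arf-type invariant. You reduce everything to the vanishing of the Kirby--Siebenmann invariant, and you get the Cappell--Shaneson-style degree-one map as a byproduct. The paper's route is shorter and stays inside the classification framework it relies on later.

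\textbf{Small slips (none affects the argument).}
\begin{enumerate}
\item $\nu(\alpha)$ is $S^1\widetilde{\times}D^3$, not $S^1\times D^3$, since $\alpha$ reverses orientation.
\item ``Degree one mod 2'' for the compressed classifying map only gives odd degree. It is your collapse map, of degree exactly one, that actually delivers the homotopy equivalence.
\item The double cover of $Q_r$ is built from $D^2\times S^2$, the double cover of $\RP^4\setminus\nu(\alpha)$, not from $\RP^4\setminus\nu(\alpha)$ itself.
\item Your opening description of the HKT invariants is loose, but you never use it.
\end{enumerate}
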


\begin{proof} The Seifert-van Kampen theorem and Lemma \ref{Lemma Generalized MT} imply that the fundamental group is $\pi_1(Q_r) = \Z/2\Z$ for every $r\in \N$. A straight-forward computation in homology reveals that $\chi(Q_r) = 1$. The classification results of Hambleton-Kreck-Teichner \cite{[HambletonKreckTeichner]} imply that $Q_r$ is homeomorphic to the real projective 4-space for every value of $r$.
\end{proof}

We now gather the values of the $\eta$-invariant of (\ref{Generalized CS RP4}).

\begin{proposition}\label{Proposition Values Eta}There are $\Pin^+$-structures $(Q_r, \pm \phi_{Q_r})$ such that\begin{equation}\label{Two Pin Structures}
  \eta(Q_r, \pm \phi_{Q_r}) = \begin{cases}
    \pm \frac{1}{8} \mod 2\Z, & \text{if $r$ is even.}\\
    \pm \frac{7}{8} \mod 2\Z, & \text{if $r$ is odd}.
  \end{cases}
\end{equation}
The corresponding $\Pin^+$-bordism classes are\begin{equation}\label{Pin Cobordism Class Qr Even}[(Q_r, \pm \phi_{Q_r})] = \pm 1 \in \Omega^{\Pin^+}_4\cong \Z/16\Z.\end{equation}for even values of $r\in \N$ and\begin{equation}\label{Pin Cobordism Class Qr Odd}[(Q_r, \pm \phi_{Q_r})] = \pm 9 \in \Omega^{\Pin^+}_4\cong \Z/16\Z\end{equation}for odd values of $r\in \N$.\end{proposition}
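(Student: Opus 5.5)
We treat $Q_r$ as a $\Pin^+$-analogue of a "sum" of $\RP^4$ and $M_{B_1\cdots B_r}$ along a loop, and compute its bordism class additively. The key geometric input is a $\Pin^+$-bordism
\begin{equation*}
(W;\, Q_r,\, \RP^4\sqcup M_{B_1\cdots B_r}).
\end{equation*}
Start from $(\RP^4\sqcup M_{B_1\cdots B_r})\times[0,1]$. At the top level, attach a $1$-handle joining a neighborhood of $\alpha\times\{1\}$ to one of $\alpha_r'\times\{1\}$, and then glue the product $\nu(\alpha)\times[0,1]$ along the resulting curve. More concretely, $W$ is obtained from the disjoint product by identifying $\nu(\alpha)\times\{1\}$ with $\nu(\alpha_r')\times\{1\}$ via the gluing map. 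This is a "round handle" / $S^1\times D^3$ plumbing cobordism. Its new boundary component is exactly
\begin{equation*}
(\RP^4\setminus\nu(\alpha))\cup(M_{B_1\cdots B_r}\setminus\nu(\alpha_r')) = Q_r .
\end{equation*}
This is the same kind of cobordism that yields $(V; M_{B_1\cdots B_r}, M_{B_1}\sqcup\cdots\sqcup M_{B_r})$ in the proof of Proposition \ref{Proposition Pin Structures Mapping Torus Generalized}.

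The $\Pin^+$-structures extend over $W$ because the gluing diffeomorphism in (\ref{Generalized CS RP4}) was chosen to match the induced $\Pin^+$-structures on the common $S^2\widetilde{\times}S^1$ boundaries. The $\Pin^+$-structures on the two tubular neighborhoods $S^1\widetilde{\times}D^3$ are then identified compatibly, and the structure on the product extends across the identification region. This step, checking that the $\Pin^+$-structures on $\RP^4$ and on $M_{B_1\cdots B_r}$ can be chosen so that the identification respects them, is the one I expect to be the main obstacle. I would handle it following \cite[p.~160]{[Stolz]}: restricted to $\nu(\alpha)\cong S^1\widetilde{\times}D^3$, there are exactly two $\Pin^+$-structures. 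Each side has two global structures $\pm\phi$, and these restrict to the two different local ones, since $H^1(\,\cdot\,;\Z/2)\to H^1(\nu(\alpha);\Z/2)$ is an isomorphism on both sides. So for each choice on $\RP^4$ there is a unique matching choice on $M_{B_1\cdots B_r}$.

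Granting the bordism, additivity of the bordism class and of the $\eta$-invariant under disjoint union \cite[Section 7]{[Stolz]} gives
\begin{equation*}
[(Q_r,\pm\phi_{Q_r})] = [(\RP^4,\pm\phi_{\RP^4})] + [(M_{B_1\cdots B_r},\phi_{M_{B_1\cdots B_r}})] \in \Omega^{\Pin^+}_4\cong\Z/16\Z .
\end{equation*}
By Theorem \ref{Theorem Stolz}, $\eta(\RP^4,\pm\phi)=\pm\frac18$, so $[\RP^4]=\pm1$ under the isomorphism $8\eta$ used in \cite{[Stolz]}. By Proposition \ref{Proposition Pin Structures Mapping Torus Generalized}, together with the bordism $V$ and Proposition \ref{Proposition MT Stolz}, the class of $M_{B_1\cdots B_r}$ is $8r$. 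This equals $0$ for $r$ even and $8$ for $r$ odd, for both of its structures, since $8=-8$ in $\Z/16$.

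Hence $[Q_r]=\pm1$ when $r$ is even and $\pm1+8=\pm9$ when $r$ is odd; note $-1+8=7\equiv-9$. The $\eta$-values follow by dividing by $8$ modulo $2\Z$: $\pm\frac18$, and $\pm\frac98\equiv\mp\frac78$. Up to relabeling the sign of the structure, this is $\pm\frac78$. The case $r=1$ gives a consistency check, since it recovers Theorem \ref{Theorem Stolz} for $Q$.
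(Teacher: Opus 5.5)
Your proposal is correct and is essentially the paper's proof. Both arguments use a $\Pin^+$-bordism between $Q_r$ and $\RP^4\sqcup M_{B_1\cdots B_r}$, additivity of $\eta$ (equivalently of classes in $\Omega^{\Pin^+}_4\cong\Z/16\Z$), and the values from Theorem~\ref{Theorem Stolz} and Proposition~\ref{Proposition Pin Structures Mapping Torus Generalized}. You also spell out what the paper leaves implicit (the round-handle bordism, the matching of $\Pin^+$-structures via $H^1(\,\cdot\,;\Z/2)\cong H^1(\nu;\Z/2)$, and the bookkeeping $-1+8\equiv-9$, $\pm\frac98\equiv\mp\frac78$). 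The one tacit point is that your $W$ needs the boundary gluing map to extend to a diffeomorphism $\nu(\alpha)\to\nu(\alpha_r')$; this holds because every self-diffeomorphism of $S^2\widetilde{\times}S^1$ extends over $S^1\widetilde{\times}D^3$.
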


\begin{proof}The result follows from Theorem \ref{Theorem Stolz} and Proposition \ref{Proposition Pin Structures Mapping Torus Generalized} given that there is a $\Pin^+$-bordism between $Q_r$ and $\RP^4\sqcup M_{B_1\cdots B_r}$ for every $r\in \N$ and the $\eta$-invariant is additive with respect to disjoint unions \cite[Section 7]{[Stolz]}.\end{proof}

\subsection{Proof of Theorem \ref{Theorem Infinite RP4s}}\label{Section Proof of Theorem Infinite RP4s}The existence of a homeomorphism between $Q_r$ and $\RP^4$ for any $r\in \N$ was established in Lemma \ref{Lemma Homeomorphism}. As it was mentioned before, Stolz \cite[Theorem B]{[Stolz]} showed that the Euler characteristic and the $\eta$-invariant determine the stable diffeomorphism class of a closed nonorientable 4-manifold with fundamental group of order two that admits a $\Pin^+$-structure. In particular, if $r = 0 \mod 2$, there is an integer $N\in \Z_{\geq 0}$ such that there is a diffeomorphism\begin{equation*}Q_r\cs N\cdot(S^2\times S^2)\rightarrow \RP^4\cs N\cdot(S^2\times S^2)\end{equation*} since $\eta(Q_r, \pm \phi_{Q_r}) = \eta(\RP^4, \pm \phi_{\RP^4}) \mod 2\Z$ by Proposition \ref{Proposition Values Eta}. On the other hand, if $r \neq 0 \mod 2$, these values differ by Proposition \ref{Proposition Values Eta} and there is no diffeomorphism\begin{equation*}Q_r\cs (n - 1)\cdot(S^2\times S^2)\rightarrow \RP^4\cs (n - 1)\cdot(S^2\times S^2)\end{equation*}for any $n\in \N$.\hfill $\square$

\section{Construction mechanism: Proof of Theorem \ref{Theorem Mechanism}.}\label{Section Construction Mechanism}

\begin{proof}Consider the compact 4-manifold $Q_r'= Q_r\setminus \nu(\alpha_r)$ obtained from the homotopy $\RP^4$ defined in (\ref{Generalized CS RP4}) by removing a tubular neighborhood of the loop $\alpha_r\subset Q_r$ whose homotopy class generates $\pi_1(Q_r) = \Z/2\Z$. Since $Q_r$ is homeomorphic to $\RP^4$, there is a homeomorphism\begin{equation}\label{Homeomorphism Block}Q_r'\rightarrow D^2\widetilde{\times} \RP^2\end{equation}as discussed at the end of Section \ref{Section CS RP4s}. For each $r\in \N$, assemble a 4-manifold\begin{equation}\label{Infinite Set R}X_r:= (X\setminus \nu(P))\cup Q_r'\end{equation}to produce a set $\{X_r: r\in \N\}$. There are two choices of gluing diffeomorphism (\ref{Infinite Set R}) by a result of Kim-Raymond \cite{[KimRaymond]}. The homeomorphism (\ref{Homeomorphism Block}) implies that the homeomorphism (\ref{Homeomorphism Infinite Set}) exists. Notice that the construction (\ref{Infinite Set R}) can be rephrased as\begin{equation}\label{Infinite Set R2}X_r = (X\setminus \nu(\alpha_X))\cup (M_{B_1\cdots B_r}\setminus \nu (\alpha_r')),\end{equation}for an orientation-reversing simple loop $\alpha_X\subset X$.

If the second Stiefel-Whitney class is $w_2(X) = 0$, then there is a $\Pin^+$-structure $(X, \phi_X)$. Since we also have a $\Pin^+$-structure $(M_{B_1\cdots B_r}, \phi_{M_{B_1\cdots B_r}})$, the gluing diffeomorphism in (\ref{Infinite Set R2}) is chosen so that it identifies the induced $\Pin^+$-structures on the common $S^2\widetilde{\times} S^1$ boundaries \cite[p. 160]{[Stolz]} (cf. \cite{[AitchisonRubinstein], [CappellShaneson]}). In particular, we can glue these structures together to obtain a $\Pin^+$-structure $(X_r, \phi_{X_r})$ and a $\Pin^+$-bordism\begin{equation}\label{Pin Bordism Theorem C}(V; X_r, X\sqcup M_{B_1\cdots B_k});\end{equation}see \cite{[HambletonKreckTeichner], [KirbyTaylor], [Stolz]}  for details. It follows that\begin{equation}\eta(X_r, \phi_{X_r}) = \eta(X, \phi_X) + \eta(M_{B_1\cdots B_r}, \phi_{M_{B_1\cdots B_r}}) \mod 2\Z\end{equation}given that the $\eta$-invariant is additive with respect to disjoint unions \cite[Section 7]{[Stolz]}. The next two values are inferred from Proposition \ref{Proposition Pin Structures Mapping Torus Generalized}.

$\bullet$ If $\eta(X, \phi_{X}) = \pm \frac{1}{2}\mod 2\Z$ or $r = 0 \mod 2$, then\begin{equation}\label{Identity 1}\eta(X_r, \phi_{X_r}) = \eta(X, \phi_X) \mod 2\Z.\end{equation}

$\bullet$ If $\eta(X, \phi_{X}) \neq \pm \frac{1}{2}\mod 2\Z$ and $r \neq 0 \mod 2$, then\begin{equation}\label{Identity 2}\eta(X_r, \phi_{X_r}) = \eta(X, \phi_X) + 1 \mod 2\Z\end{equation}and\begin{equation}\label{Identity 3}\eta(X_r, \phi_{X_r}) \neq \eta(X, \phi_X) \mod 2\Z.\end{equation}
If $\eta(X, \phi_X)\neq \pm \frac{1}{2}\mod 2\Z$ and $r \neq 0 \mod 2\Z$, it follows that there is no diffeomorphism\begin{equation}\label{Diffeo Theorem C}f:X_r\cs (n - 1)\cdot(S^2\times S^2)\rightarrow X\cs (n - 1)\cdot(S^2\times S^2)\end{equation} such that $f^\ast(\phi_{X\cs (n -1)\cdot(S^2\times S^2)}) = \phi_{X_r\cs (n - 1)\cdot(S^2\times S^2)}$; these are the $\Pin^+$-structures on the connected sums $X\cs (n - 1)\cdot(S^2\times S^2)$ and $X_r\cs (n - 1)\cdot(S^2\times S^2)$. The hypothesis $H^1(X; \Z/2) = \Z/2$ implies that there are only two $\Pin^+$-structures to consider, and they are mutual inverses in $\Omega^{\Pin^+}_4$ \cite[p. 190]{[KirbyTaylor]} as it was mentioned before. Therefore, there is no diffeomorphism (\ref{Diffeo Theorem C}) for any $n\in \N$ at all.\end{proof}

\begin{remark}\label{Remark Hypothesis H1}The hypothesis $H^1(X; \Z/2) = \Z/2$ of Theorem \ref{Theorem Mechanism} is technical in nature and its inclusion allows us to keep the exposition short. The hypothesis can be removed by observing that some of Stolz's results \cite{[Stolz]} can be extended to obstruct the existence of a diffeomorphism between two given 4-manifolds that admit more than two $\Pin^+$-structures whenever the values of their $\eta$-invariants differ for every choice of $\Pin^+$-structure.\end{remark}

\section{Questions and further work.}\label{Section Questions}

We finish the paper stating three canonical questions regarding the nonorientable 4-manifolds that were constructed.

\begin{qst}\label{Question Infinite}Is there a 4-manifold $X$ for which the set $\{X_r: r\in \N\}$ consists of pairwise non-diffeomorphic elements?
\end{qst}

A result of Gompf \cite{[Gompf1]} states that any two closed nonorientable homeomorphic 4-manifolds become diffeomorphic after connect summing with sufficiently large number copies of the complex projective plane. 

\begin{qst}Is $X_r\cs \CP^2$ diffeomorphic to $X\cs \CP^2$ for every $r\in \N$?
\end{qst}

Cappell-Shaneson's construction in \cite{[CappellShaneson]} need not yield inequivalent smooth structure. Akbulut \cite[Section 2]{[Akbulut0]} showed that an application of the method in \cite[Theorem 3.1]{[CappellShaneson]} to $\RP^2\times S^2$ does not produce an inequivalent smooth structure. Instead, their construction yields a homeomorphism $\RP^2\times S^2\rightarrow \RP^2\times S^2$ whose normal invariant is nontrivial \cite{[Akbulut0]}. Akbulut \cite[Theorem 2]{[Akbulut0]} also showed that there is a diffeomorphism $Q\cs \CP^2\rightarrow \RP^4\cs \CP^2$. These results suggest the following variation to Question \ref{Question Infinite}. 

\begin{qst}Is there a 4-manifold $X_{r_0}$ in the set $\{X_r: r\in \N\}$ that is not diffeomorphic to $X$ for $X = \RP^4\cs \CP^2$ or $\RP^2\times S^2$?
\end{qst}

\end{document}